\documentclass[11pt]{article}
\usepackage[T1]{fontenc}
\usepackage{lmodern,amsmath,amssymb,amsthm,mathtools,comment}
\usepackage[margin=1in]{geometry}
\usepackage{microtype,booktabs}
\usepackage[colorlinks=true,linkcolor=blue,citecolor=blue,urlcolor=blue]{hyperref}
\usepackage{enumitem}
\setlist{itemsep=3pt,topsep=5pt}
\newtheorem{theorem}{Theorem}[section]
\newtheorem{proposition}[theorem]{Proposition}
\newtheorem{lemma}[theorem]{Lemma}
\newtheorem{corollary}[theorem]{Corollary}
\theoremstyle{definition}

\newtheorem*{dfn}{Definition}
\theoremstyle{remark}
\newtheorem{remark}[theorem]{Remark}
\newcommand{\Z}{\mathbb Z}
\newcommand{\Q}{\mathbb Q}
\newcommand{\C}{\mathbb C}
\newcommand{\F}{\mathbb F}
\newcommand{\HH}{\mathbb H}
\newcommand{\spt}{\operatorname{spt}}

\newcommand{\End}{\operatorname{End}}

\newcommand{\covol}{\operatorname{covol}}
\newcommand{\reg}{\mathrm{reg}}

\newcommand{\leg}[2]{\left(\frac{#1}{#2}\right)}
\newcommand{\chip}{\chi_{12}}
\newcommand{\e}{\mathrm e}
\numberwithin{equation}{section}
\title{Class polynomial obstructions to canonical congruences\\for the smallest parts function}
\author{Larry Rolen}
\begin{document}
\maketitle
\begin{abstract}
We resolve an open conjecture by proving that the only ``canonical'' Ramanujan congruence primes for the Andrews' smallest parts function, $\spt(n)$, are $\ell= 5, 7, 13$. 
Building on work of Ahlgren and Kim, we show that any such congruence forces square-index coefficients of certain weakly holomorphic
forms of weight $1/2$ to vanish modulo the congruence prime. A normalized
Borcherds product converts this into the vanishing of the derivative of a
quotient of two oriented CM polynomials on $X_0(6)$. A quaternionic
volume estimate in an Eichler order of level $6$ prevents that vanishing
when a suitable inert discriminant is small. Trevi\~no's 
character estimate supplies such a discriminant for every prime at least
$10^{11}$. The remaining range is covered by the previous calculations
of Ahlgren and Kim.
These results have bee verified in Lean by AxiomProver.
\end{abstract}

\section{Statement and outline}
Let $\spt(n)$ denote the total number of occurrences of a smallest part
in all partitions of $n$.
This function was introduced by Andrews in \cite{Andrews}
who  used rank and crank moments for integer partitions to show the following Ramanuan-style congruences. Specifically, he showed that for all $n$, we have
\[
\spt(5n+4)\equiv0\pmod5,\qquad
\spt(7n+5)\equiv0\pmod7,\qquad
\spt(13n+6)\equiv0\pmod{13}.
\]
Many further works have studied the $\spt$ function and further generalized congruences, crank-type explanations, and arithmetic; see 
\cite{AA,ABL,MockModGrids,AGL,Garvan,Ono}
for a few such papers.
Since Andrews' discovery of these beautiful congruences, it has been a long-standing conjecture in the community that $5,7,$ and $13$ are the only primes which admit congruences of this shape. To be precise, we introduce the following notion, where $\overline{a}\pmod \ell$ denotes an inverse of $a$ modulo $\ell$. 
\begin{dfn}
For a prime $\ell\ge5$, let $b_\ell\in\{1,\ldots, \ell-1\}$ be the reduced representative of 
$\overline{24}\pmod \ell$. We say that $\ell$ is a \emph{canonical congruence
prime} if
\begin{equation}\label{eq:canonical}
\spt(\ell n+b_\ell)\equiv0\pmod \ell\qquad \text{ for all } n\ge0.
\end{equation}
\end{dfn}
In this language, Ahlgren and Kim \cite{AK} studied such canonical congruences and showed that it can only exist when a computable cusp form of level $1$ and weight $\ell+1$ vanishes. The role of $\ell\in\{5, 7, 13\}$ then becomes that the spaces $S_6, S_8, $ and $S_{14}$ are all zero-dimensional. They then used this criterion to numerically show that 
there are no additional canonical congruence
primes $\ell<10^{11}$. Here, we build on their techniques to show that Andrews' congruences are indeed the only ones of this type. 

\begin{theorem}\label{thm:main}
The canonical congruence primes for $\spt(n)$ are precisely $5$, $7$, and $13$.
\end{theorem}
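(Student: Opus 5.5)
The plan follows the outline in the abstract. The primes $5,7,13$ are canonical congruence primes by Andrews' congruences, and Ahlgren--Kim \cite{AK} rule out every other prime $\ell<10^{11}$. So it suffices to show that no prime $\ell\ge 10^{11}$ is a canonical congruence prime. Fix such an $\ell$ and suppose \eqref{eq:canonical} holds.

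\emph{Step 1: reduction to vanishing of traces.} First I would recast the Ahlgren--Kim criterion. The generating function of $\spt$ is tied to a weight $3/2$ harmonic Maass form, and a canonical congruence forces a weight $\ell+1$ level-one cusp form to vanish mod $\ell$. Applying a weight-changing and duality argument (Zagier duality, or $\theta$-operator and $U_\ell$ manipulations modulo $\ell$), I would convert this into a statement about a weakly holomorphic weight $1/2$ form $g_\ell$ on $\Gamma_0(6)$ or $\Gamma_0(144)$, twisted by $\chip$. The statement is that its coefficients $c(D)$ satisfy $c(\ell m^2)\equiv 0 \pmod \ell$, or more usefully $c(D)\equiv0$ for square-index type discriminants $D$ that are inert at $\ell$.

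\emph{Step 2: Borcherds products.} The coefficients of $g_\ell$ are exponents of a Borcherds product $\Psi$ on $X_0(6)$. After normalizing so that $\Psi$ has $\ell$-integral unit leading term, its divisor is a combination of Heegner (CM) divisors. The vanishing mod $\ell$ of the exponents means $\Psi$ is an $\ell$-th power mod $\ell$, so its logarithmic derivative vanishes mod $\ell$. Evaluating at CM points, I would obtain that the derivative of a quotient $P_{D_1}/P_{D_2}$ of oriented class polynomials vanishes modulo a prime $\mathfrak l\mid\ell$. This amounts to a coincidence mod $\mathfrak l$ between two CM points, or a CM point meeting a ramification point, for a discriminant $-d$ at which $\ell$ is inert.

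\emph{Step 3: quaternionic obstruction.} At a prime $\ell$ inert in $\Q(\sqrt{-d})$, CM points reduce to supersingular points. A coincidence of two reductions gives an optimal embedding of two orders of discriminants $d_1,d_2$ into a single Eichler order of level $6$ in the quaternion algebra ramified at $\ell,\infty$. The two embedded elements generate a rank-4 suborder, and its reduced discriminant divides a quantity like $d_1d_2-t^2$. Comparing with the covolume of the Eichler order, which is roughly $6\ell$, gives $\ell\ll d_1d_2$. So if an inert discriminant $d$ with $d^2\ll \ell$ exists, we get a contradiction.

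\emph{Step 4: existence of small inert discriminants.} I would need a fundamental discriminant $-d$ with suitable congruence conditions mod $24$ (so the Heegner point lives on $X_0(6)$ with the right orientation) and $\leg{-d}{\ell}=-1$, with $d\le c\,\ell^{1/2-\delta}$ explicitly. Trevi\~no's explicit Burgess-type bound on the least character nonresidue gives this for $\ell\ge10^{11}$.

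The main obstacle is matching the constants. The quaternionic volume bound must beat the explicit Trevi\~no bound exactly from $10^{11}$ on, and the $X_0(6)$ orientation and congruence conditions on $d$ cost extra factors. If the naive $d^2<\ell$ fails, I would try sharpening Step 3 using the extra level-$6$ structure, or combining two different small discriminants.
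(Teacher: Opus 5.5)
Your outline follows the paper's architecture: Ahlgren--Kim below $10^{11}$, square-index vanishing mod $\ell$, a Borcherds product on $X_0(6)$ equal to a quotient of oriented class polynomials, a level-$6$ Eichler-order volume bound, and Trevi\~no. However, two load-bearing steps are missing.

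\textbf{Step~1: wrong target, no mechanism.} The weight $1/2$ input must be indexed by the auxiliary inert discriminant, not by $\ell$. For each $n\equiv23\pmod{24}$ with $\leg{-n}{\ell}=-1$ one uses $H_n=\eta P_n(j)=q^{-n/24}-a(n)q^{1/24}+\cdots$. What must vanish mod $\ell$ are all Borcherds exponents $h_n(m^2)$ with $\ell\nmid m$. Inertness is a condition on the principal-part index $n$, not on the coefficient indices. A condition like $c(\ell m^2)\equiv0$ concerns non-square indices and says nothing about the product.

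The theta-cycle congruence $\Theta^{(\ell-1)/2}f\equiv\varepsilon_\ell f$ only gives $a(n)\equiv0$, i.e.\ $h_n(1)\equiv0$. To reach $m>1$ you need:
\begin{itemize}
\item the weight $3/2$ forms $G_m=q^{-m^2/24}-\chi_{12}(m)q^{-1/24}+\cdots$;
\item the Hecke relations $f|T(p^2)-\chi_{12}(p)(1+p)f=pG_p$ and the recursion for $G_m|T(p^2)$;
\item the fact that $T(p^2)$ commutes with $\Theta^{(\ell-1)/2}$ mod $\ell$ for $p\ne\ell$.
\end{itemize}
Together these propagate the congruence to every $G_m$ with $(m,6\ell)=1$. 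Only then does the duality $h_n(m^2)=-b_m(n)-\chi_{12}(m)a(n)$ give what you want. Zagier duality alone does not. Also, the absence of cusp divisors, which you fold into ``normalizing'', is the vanishing of the Weyl vectors. That comes from $\langle H_n,\eta\rangle^{\reg}=0$, i.e.\ $[q^0](H_nf)=a(n)-a(n)=0$.

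\textbf{Step~3: noncommutativity is not justified.} You assume the two embedded elements generate a rank-$4$ order. But the $\chi_{12}$-twist forces the divisor to be $D_{n,1}+D_{n,11}-D_{n,5}-D_{n,7}$. So colliding CM points have the \emph{same} discriminant, and the images $\alpha,\beta$ of $\omega=(1+\sqrt{-n})/2$ could a priori commute. Then $\beta\in\{\alpha,1-\alpha\}$, the lattice degenerates, and your $d_1d_2-t^2$ is $0$.

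The missing idea is what the orientation buys. For $Q=[a,b,c]$ with $6\mid a$, $\omega$ acts on $C_Q=\langle1/6\rangle$ by $(b+1)/2\bmod 6$. That is $1,0$ when $b\equiv1,11$ and $3,4$ when $b\equiv5,7\pmod{12}$, and this survives reduction since $\ell\nmid6$. Meanwhile $\alpha$ and $1-\alpha$ both act by $0$ or $1$, so $\beta$ cannot commute with $\alpha$. Only then does Hadamard give $6\ell\le4N(\alpha)N(\beta)=(n+1)^2/4$.

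\textbf{Smaller points.}
\begin{itemize}
\item The alternative to a collision is not a ``ramification point'' but $\overline A_n'=0$, since coprimality forces $\overline A_n\mid\overline A_n'$. This is excluded by $2h(-n)<\ell$, which Step~4 must also supply.
\item A least-nonresidue bound does not by itself produce $n\equiv23\pmod{24}$. You need Trevi\~no's bound $3.38\,\ell^{1/4}\log\ell$ for intervals on which a character is constant, applied to $\leg{1-24k}{\ell}=\leg{-24}{\ell}\leg{k-c}{\ell}$ with $24c\equiv1\pmod\ell$. Squarefreeness of $n$ then follows from minimality of $k$.
\item The constants close with no sharpening: $n+1\le81.12\,\ell^{1/4}\log\ell+24<\sqrt{24\ell}$ for $\ell\ge10^{11}$, which is exactly what the volume bound needs.
\end{itemize}
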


\section*{Use of generative AI and autoformalization}
The proofs arose out of a back and forth conversation between the author with AI. The author takes responsibility for their correctness and for the clarity of the exposition. 
The author is also grateful to Ken Ono for his assistance in autoformalizing the results of this paper.
After completing the first draft, the author sent it to Ken Ono, who used AxiomProver to verify the proofs in Lean. These formal verifications were completely carried out by AxiomProver.
The relevant files are all posted in the following repository:
\url{https://github.com/larryrolen/Spt_Noncongruences_Lean}

\section{The mock modular form for $\spt$ and the action of Hecke operators}

As usual, set $q:=\e^{2\pi i\tau}$ and denote differentiation by $\Theta:=q\,\frac{d}{dq}$. Further denote the integer partition function by $p(n)$.
The generating function for $\spt(n)$ is closely related to a mock modular form; see \cite{Bringmann}, Section~6 of \cite{Zagier}, and Section~3 of \cite{AA} . 
To describe this, let
\[
f(\tau):=\sum_{n\equiv23 \pmod{24}}a(n)q^{n/24}
=q^{-1/24}(1-35q-130q^2-273q^3-\cdots),
,
\]
where
\[
a(24n-1):=-12\spt(n)-(24n-1)p(n).\]
We also recall that $\eta(\tau):=q^{1/24}\prod_{n\geq1}(1-q^n)$ is Dedekind's eta function, which transforms as a modular form on $\mathrm{SL}_2(\Z)$
with multiplier $\nu_\eta$. 
Bringmann proved \cite{Bringmann} that $f(\tau)$ has 
a harmonic Maass completion $\widehat f$ with weight $3/2$ and multiplier
$\overline{\nu_\eta}$.
This completion is determined by its shadow:
\begin{equation}\label{eq:shadow}
\xi_{3/2}\widehat f=\frac{3\sqrt 6}{\pi}\eta.
\end{equation}
where $\xi_k g:=2iy^k\overline{\partial g/\partial\bar\tau}$.

As we shall see, we can use Hecke and Hecke-like operators to reduce the study of $\spt$-congruences to questions about (weakly) holomorphic modular forms. Thus, we won't need the full strength of harmonic Maass forms theory in what follows.
We also require the $U_\ell$-operator, which sends the $q$-series 
 $g=\sum_n c(n)q^{n/24}$ to 
$g|U_\ell:=\sum_n c(\ell n)q^{n/24}$. 
Using the definition of $a(n)$, we find that 
\eqref{eq:canonical} is equivalent to $f|U_\ell\equiv0\pmod \ell$.
We use the following consequence of the theta-cycle argument of Ahlgren
and Kim.

\begin{proposition}[Ahlgren--Kim \cite{AK}]\label{prop:theta}
If $\ell$ is a canonical congruence prime, then
\begin{equation}\label{eq:theta}
\Theta^{(\ell-1)/2}f\equiv\varepsilon_\ell f\pmod \ell,
\qquad \text{ where } \varepsilon_\ell=\leg{-24}{\ell}.
\end{equation}
In particular, $a(n)\equiv0\pmod \ell$ whenever $\leg{-n}{\ell}=-1$.
\end{proposition}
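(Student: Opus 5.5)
The plan is to make the mock modular form $f$ into an honest mod-$\ell$ modular form and then run the classical theta-cycle argument on its reduction. Throughout I work with $q$-expansions in $\Z[[q^{1/24}]]$ and use the fact that $\Theta^{\ell}\equiv\Theta\pmod \ell$ on such series.

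\textbf{Step 1: kill the nonholomorphic part.} Bringmann's completion $\widehat f$ has shadow proportional to $\eta$ by \eqref{eq:shadow}. The Hecke operator $T(\ell^2)$ in weight $3/2$ acts on $\eta$ (weight $1/2$) through the dual eigenvalue, and $\eta$ is a Hecke eigenform with eigenvalue $\leg{12}{\ell}(1+\ell^{-1})$ up to normalization. Hence
\[
\widehat f\,|\,T(\ell^2)-\leg{12}{\ell}(1+\ell)\widehat f
\]
has zero shadow, so it is a weakly holomorphic form of weight $3/2$ with multiplier $\overline{\nu_\eta}$. Writing the Hecke operator in coefficients, this form is
\[
f|U_{\ell^2}+\ell^{-1}\leg{3n}{\ell}\text{-twisted terms}+\ell\, f|V_{\ell^2}-(\text{eigenvalue})f .
\]
The powers of $\ell$ can then be cleared by multiplying by $\ell$. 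Doing so and then reducing modulo $\ell$ leaves a relation between $f|U_\ell$-type pieces, a twist of $f$ by $\leg{-n}{\ell}$, and $f$ itself, with a genuine modular form on the other side. Under the hypothesis $f|U_\ell\equiv 0\pmod\ell$, and after multiplying by suitable powers of $\eta$ and $\Delta$ to reach integral weight on $\mathrm{SL}_2(\Z)$, I expect the upshot to be that
\[
\sum_{\leg{-n}{\ell}=\pm1}a(n)q^{n/24}
\]
is congruent to a holomorphic modular form mod $\ell$ of some explicit filtration.

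\textbf{Step 2: express the twists with $\Theta$.} Since
\[
\Theta^{(\ell-1)/2}\textstyle\sum a(n)q^{n/24}\equiv\sum \leg{n/24}{\ell}a(n)q^{n/24},
\]
the twisted pieces become $\Theta^{(\ell-1)/2}f$ up to the factor $\leg{24}{\ell}$. The hypothesis $f|U_\ell\equiv0$ means exactly that $f\equiv\Theta^{\ell-1}f$. So $f$ splits mod $\ell$ into its $\leg{-n}{\ell}=+1$ and $\leg{-n}{\ell}=-1$ parts, namely $\tfrac12(\Theta^{\ell-1}f\pm\varepsilon_\ell\Theta^{(\ell-1)/2}f)$. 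The claim \eqref{eq:theta} is precisely that the $-1$ part vanishes.

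\textbf{Step 3: theta-cycle / filtration argument (main obstacle).} Set $g$ to be an integral-weight holomorphic form, for instance $\eta^{\ell}f$ or $f\cdot\eta^{r}$ with $r$ chosen so that $g$ lies in $M_k(\mathrm{SL}_2(\Z))$ mod $\ell$. The factor $\eta^\ell\equiv\eta(\ell\tau)$ commutes with $\Theta$ mod $\ell$ up to a shift of exponent. I then use Tate and Serre's facts:
\begin{itemize}
\item $w(\Theta h)\le w(h)+\ell+1$, with equality unless $w(h)\equiv0\pmod\ell$;
\item $\Theta^{\ell}h\equiv\Theta h$.
\end{itemize}
Since $g$ lies in a theta-cycle ($\Theta^{\ell-1}g\equiv g$), counting the filtration increases around the cycle forces exactly the standard patterns, with one or two drops. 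Using Step 1's bound on $w(g)$, roughly $w(g)\le(\ell+1)/2+$ small, I would show the cycle must return at the halfway point, giving $\Theta^{(\ell-1)/2}g\equiv\pm g$. Comparing leading coefficients at $q^{-1/24}$ fixes the sign to be $\leg{-24}{\ell}=\varepsilon_\ell$.

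The hardest part is pinning the weights precisely enough that the cycle has the required low-point structure. This is where Ahlgren--Kim's careful bookkeeping enters, and I would follow their filtration computation rather than rederive it. The final sentence of the proposition then follows immediately: comparing coefficients in \eqref{eq:theta} gives $(\leg{n/24}{\ell}-\varepsilon_\ell)a(n)\equiv0$, and $\leg{n/24}{\ell}\ne\varepsilon_\ell$ exactly when $\leg{-n}{\ell}=-1$.
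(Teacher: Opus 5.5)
\textbf{The gap is Step 3.} That step is the only place where \eqref{eq:theta} would actually be proved, and it is not an argument.
\begin{itemize}
\item Its inputs are not established. The form $f$ is only mock modular, so $\eta^\ell f$ is not known to be congruent to a holomorphic form on $\mathrm{SL}_2(\Z)$; in fact $\eta^\ell f$ has multiplier $\nu_\eta^{\ell-1}$. Your Step 1 also produced no bound on $w(g)$.
\item Its conclusion does not follow from the method. Filtration counting constrains the weights of the $\Theta^i g$, not the forms themselves, so it cannot by itself yield $\Theta^{(\ell-1)/2}g\equiv\pm g$.
\end{itemize}
Deferring to Ahlgren--Kim's bookkeeping at this point is exactly what the paper does: it cites \cite[Proposition~6]{AK} and then applies Euler's criterion, as in your last sentence. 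As written, your sketch reduces to that citation and adds unproved claims on top of it.

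\textbf{What you missed is that Step 1, correctly normalized, already finishes the proof.} In weight $3/2$ the middle term of \eqref{eq:Hecke} carries no factor $\ell^{-1}$, and the twist is $\chip(\ell)\leg{-n}{\ell}=\leg{-3n}{\ell}$. Taken literally, your $\ell^{-1}$ would force the whole twist of $f$ to vanish mod $\ell$ after clearing denominators, which is false since $a(-1)=1$. Zero shadow, the principal-part computation and uniqueness give the exact identity \eqref{eq:f-Hecke}, namely $f|T(\ell^2)-\chip(\ell)(1+\ell)f=\ell G_\ell$ with $G_\ell$ integral. This uses nothing from the present proposition. So the ``genuine modular form on the other side'' is $\equiv0$, and comparing coefficients gives
\[
a(\ell^2n)+\chip(\ell)\Bigl(\leg{-n}{\ell}-1\Bigr)a(n)\equiv0\pmod\ell
\]
for every $n$; this is Ono's congruence.

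Now finish as follows.
\begin{itemize}
\item Under $f|U_\ell\equiv0$ the first term vanishes, so $a(n)\equiv0$ whenever $\leg{-n}{\ell}=-1$.
\item The hypothesis also gives $a(n)\equiv0$ when $\ell\mid n$.
\item The $n$-th coefficient of $\Theta^{(\ell-1)/2}f-\varepsilon_\ell f$ is $\equiv\varepsilon_\ell\bigl(\leg{-n}{\ell}-1\bigr)a(n)\pmod\ell$, so these two facts are precisely \eqref{eq:theta}.
\end{itemize}
Your Step 2 reformulation (``the $-1$ part vanishes'') is therefore proved directly, with no theta cycle or filtration argument needed.
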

\begin{proof}
The first assertion follows from \cite[Proposition~6]{AK}, after cancelling the factor of $\eta$ out.
For the second claim, note that the iterated derivative operator multiplies a coefficient of $q^{n/24}$
by $(n/24)^{(\ell-1)/2}$. An application of Euler's criterion for quadratic residues then completes the proof. 
\end{proof}

A well-known identity of Euler states that $\eta$ can be written using the Kronecker character 
$\chip(m):=\leg{12}{m}$
as the theta series:
\begin{equation}\label{eq:eta-theta}
\eta(\tau)=\sum_{m\ge1}\chip(m)q^{m^2/24}.
\end{equation}
\begin{proposition}\label{Grid}
For each $m>1$ with $(m,6)=1$ there is a unique integral weakly
holomorphic form
\begin{equation}\label{eq:Gm}
G_m=q^{-m^2/24}-\chip(m)q^{-1/24}
       +\sum_{n>0}b_m(n)q^{n/24}
\in M^!_{3/2}(\overline{\nu_\eta}).
\end{equation}
\end{proposition}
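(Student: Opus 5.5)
We prove uniqueness and existence separately. The uniqueness argument also shows why the two principal-part terms in \eqref{eq:Gm} are tied together.

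\emph{Uniqueness.} Suppose $G,G'$ both have the shape \eqref{eq:Gm}. Then $H:=G-G'\in M^!_{3/2}(\overline{\nu_\eta})$ has $H=O(q^{1/24})$ at $\infty$. Since the multiplier is on all of $\mathrm{SL}_2(\Z)$, there is only one cusp, so $H$ is a holomorphic cusp form of weight $3/2$ with multiplier $\overline{\nu_\eta}$. Then $H\eta$ has weight $2$ and trivial multiplier on $\mathrm{SL}_2(\Z)$, and it vanishes at $\infty$. It is therefore a holomorphic cusp form in $S_2=\{0\}$, so $H=0$. The same argument also gives the pairing constraint: by the residue theorem, the constant term of $F\eta$ vanishes for every $F\in M^!_{3/2}(\overline{\nu_\eta})$, because $F\eta$ has trivial multiplier and weight $2$, so $F\eta\,d\tau$ is a meromorphic differential on $X(1)$ whose only pole is at the cusp. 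Using \eqref{eq:eta-theta}, the constant term of $G_m\eta$ is $\chip(m)\cdot 1-\chip(m)\cdot\chip(1)=0$, so the prescribed principal part is consistent.

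\emph{Existence.} My first approach would construct $G_m$ as $E/\eta$, where $E$ is a level-one weakly holomorphic form of weight $2$. Multiplying $q^{-m^2/24}$ by $\eta$ gives $q^{-(m^2-1)/24}(1+\cdots)$, and $(m^2-1)/24$ is a positive integer $N$ because $(m,6)=1$ and $m>1$. We need $E\in M_2^!$ with principal part matching $\eta\cdot(q^{-m^2/24}-\chip(m)q^{-1/24})$ up to order $q^{0}$ inclusive. The coefficient of $q^0$ in that product is forced to be $0$ by the residue argument above. The obstacle is that $E$ must also vanish to order $\ge1$ in the right way so that $E/\eta$ is holomorphic on $\HH$; since $\eta$ never vanishes on $\HH$, this is automatic. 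Concretely, one can take
\[
E=\Theta(P(j)),
\]
where $P$ is a polynomial with integer coefficients. Every weakly holomorphic form of weight $2$ on $\mathrm{SL}_2(\Z)$ with vanishing constant term is $\Theta$ of a polynomial in $j$. We then choose $P$ of degree $N$ inductively so that the principal part of $\Theta P(j)$ equals the first $N$ terms of $\eta\, q^{-m^2/24}$, divided suitably.

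\textbf{Main obstacle: integrality.} Matching $\Theta P(j)$ to the $q$-expansion requires dividing by the exponents $-k$ for $1\le k\le N$, which may introduce denominators. The fix is to work instead with the Hecke-type construction already in the paper's setting. I would define
\[
G_m:=\text{(suitable normalization of) } f\,|\,T(m^2)-\chip(m)\,m\,f\cdots
\]
Here the combination is chosen so that the shadows cancel via \eqref{eq:shadow}, because $\eta|T(m^2)$ is a multiple $\chip(m)(1+1/m)\eta$-type scalar multiple of $\eta$. The result is then a weakly holomorphic form, with integrality inherited from $f$ and the Hecke formula. Alternatively, one can avoid $f$ entirely and use the Duke--Jenkins style basis: build $G_m$ recursively as $\eta^{-1}\cdot(\text{polynomial in } j)\cdot\Theta j/ \Delta$-type integral forms, reducing the leading term $q^{-m^2/24}$ by subtracting integer multiples of previously constructed forms with smaller orders. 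Each step uses leading coefficient $1$, so integrality is preserved. The residue constraint guarantees that the process terminates with exactly $-\chip(m)q^{-1/24}$ as the remaining principal term.
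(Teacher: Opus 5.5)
Your uniqueness argument is correct and is the paper's. Your residue-theorem check that the constant term of $G_m\eta$ vanishes is also correct, and it is exactly what forces the $q^{-1/24}$ coefficient to be $-\chip(m)$. The gap is in existence \emph{with integral coefficients}, which is part of the statement, and none of your three routes establishes it.

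\emph{First route.} As you note, this only produces $G_m$ over $\Q$.

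\emph{Hecke route.} For prime $m=\ell$, the form $f|T(\ell^2)-\chip(\ell)(1+\ell)f$ has principal part $\ell q^{-\ell^2/24}-\ell\chip(\ell)q^{-1/24}$. So the ``suitable normalization'' is division by $\ell$. Integrality of the quotient is then equivalent to the congruence $a(\ell^2n)\equiv\chip(\ell)\bigl(1-\leg{-n}{\ell}\bigr)a(n)\pmod\ell$ for all $n$. This is a nontrivial statement about $\spt$ and $p(n)$; it does not follow from the integrality of $f$, and given uniqueness it is equivalent to what you want to prove. That route also says nothing about composite $m$, and it presupposes Bringmann's theorem.

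\emph{Duke--Jenkins route.} This has the right shape, but the seed you name, $\eta^{-1}\cdot P(j)\cdot\Theta j/\Delta$, has weight $-\tfrac12+2-12=-\tfrac{21}{2}$, not $3/2$. You also never exhibit the integral forms in $M^!_{3/2}(\overline{\nu_\eta})$ with leading term $q^{-(24k+25)/24}$ and coefficient $1$ that the recursion needs.

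The missing ingredient is the paper's seed $\mathcal G:=-\Theta j/\eta=E_4^2E_6/\eta^{25}=q^{-25/24}(1+O(q))\in M^!_{3/2}(\overline{\nu_\eta})$, which has integer coefficients. Put $N=(m^2-1)/24$. For $0\le k\le N-1$ the forms $\mathcal G j^k$ are integral with leading term $q^{-(24k+25)/24}$ and coefficient $1$. Triangular elimination over $\Z$ then yields an integral form with principal part $q^{-m^2/24}+c\,q^{-1/24}$, and your residue computation gives $c=-\chip(m)$.

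Equivalently, in your first route write $E=-\Theta(j)\,Q(j)$ with $Q\in\Z[X]$, rather than $E=\Theta(P(j))$ with $P\in\Z[X]$. Since $-\Theta j=q^{-1}+O(q)$ is integral with unit leading coefficient, and $\eta^{-1}\in q^{-1/24}\Z[[q]]$, no denominators ever appear. The denominators you worried about come only from insisting that $P$ be integral. For example, $\eta G_7=-\Theta(j)(j+c)=-\Theta(j^2/2+cj)$ with $c\in\Z$ is integral even though $P$ is not.
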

\begin{remark}
We extend the definition by setting $G_1:=0$.
\end{remark}
\begin{proof}
We first show that such a weakly holomorphic modular form exists. 
Using the modular $j$-invariant and the usual normalized Eisenstein series $E_k=1+O(q)\in M_k$, note that 
\[
\mathcal G:=-\frac{\Theta j}{\eta}
=\frac{E_4^2E_6}{\eta^{25}}
=q^{-25/24}\left(1+O(q)\right)\in M^!_{3/2}(\overline{\nu_\eta}).
\]
has integral coefficients. 
This form lives in the right modular space, but we need to adjust its principal part. Recall that polynomials in $j$ yield weakly holomorphic modular functions of level $1$ with 
arbitrary principal part. For any $m\geq1$ we obtain a claimed form $G_m$ by 
multiplying $\mathcal G$ by an appropriate polynomial in $j$ to force the desired principal part. 

Uniqueness follows from the fact that  the difference of two such forms,
multiplied by $\eta$, would belong to $M_2(\mathrm{SL}_2(\Z))=\{0\}$.
\end{proof}

For a prime $\ell\ge5$ we use the weight $3/2$ Hecke operator:
\begin{equation}\label{eq:Hecke}
\left(\sum_n c(n)q^{n/24}\right)\big|T(\ell^2)
:=\sum_n\left(c(\ell^2n)+\chip(\ell)\leg{-n}{\ell}c(n)
                   +\ell c(n/\ell^2)\right)q^{n/24}.
\end{equation}
We now need some Hecke relations; see also 
\cite{MockModGrids,Ono} for very similar discussions.

\begin{lemma}\label{lem:hecke-identities}
For every prime $\ell\ge5$, we have
\begin{equation}\label{eq:f-Hecke}
f|T(\ell^2)-\chip(\ell)(1+\ell)f=\ell G_\ell.
\end{equation}
For $m>1$, $(m,6)=1$, we also have
\begin{align}
G_m|T(\ell^2)={}&\ell G_{m\ell}
 +\chip(\ell)\leg{m^2}{\ell}G_m
 +G_{m/\ell}
 -\ell\chip(m)G_\ell\label{eq:G-Hecke},
\end{align}
where $G_{m/\ell}$ is considered to be $0$ if $\ell\nmid m$. 
\end{lemma}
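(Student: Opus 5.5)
Both identities will be proved the same way: show that the difference of the two sides lies in $M^!_{3/2}(\overline{\nu_\eta})$, compare principal parts, and conclude with the uniqueness argument from Proposition~\ref{Grid}. That argument says that a weakly holomorphic form of weight $3/2$ and multiplier $\overline{\nu_\eta}$ with vanishing principal part and vanishing constant term becomes an element of $M_2(\mathrm{SL}_2(\Z))=\{0\}$ after multiplying by $\eta$. Since exponents lie in $\frac{1}{24}(-1+24\Z)$, there is no constant term, so it suffices to match the coefficients of $q^{n/24}$ for $n<0$.

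\textbf{The form $G_m|T(\ell^2)$.} The operator $T(\ell^2)$ in \eqref{eq:Hecke} preserves $M^!_{3/2}(\overline{\nu_\eta})$ (standard Hecke theory for half-integral weight with the eta multiplier), so this side needs no separate argument. I compute its principal part from \eqref{eq:Hecke}, using that the negative exponents of $G_m$ are $-m^2$ and $-1$.
\begin{itemize}
\item The term $c(\ell^2 n)$ is nonzero for $n<0$ only at $n=-m^2/\ell^2$ (if $\ell\mid m$), with coefficient $1$. Here $\ell^2\nmid 1$, so no term comes from $-1$.
\item The middle term gives $\chip(\ell)\leg{m^2}{\ell}q^{-m^2/24}-\chip(m)\chip(\ell)\leg{1}{\ell}q^{-1/24}$.
\item The term $\ell c(n/\ell^2)$ gives $\ell q^{-m^2\ell^2/24}-\ell\chip(m)q^{-\ell^2/24}$.
\end{itemize}

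\textbf{Matching against the right side of \eqref{eq:G-Hecke}.} The principal parts of the four terms are:
\begin{itemize}
\item $\ell G_{m\ell}$ gives $\ell q^{-m^2\ell^2/24}-\ell\chip(m\ell)q^{-1/24}$;
\item $\chip(\ell)\leg{m^2}{\ell}G_m$ gives the middle-term contributions, with its $q^{-1/24}$ coefficient $-\chip(\ell)\leg{m^2}{\ell}\chip(m)$;
\item $G_{m/\ell}$ gives $q^{-m^2/\ell^2/24}-\chip(m/\ell)q^{-1/24}$;
\item $-\ell\chip(m)G_\ell$ gives $-\ell\chip(m)q^{-\ell^2/24}+\ell\chip(m)\chip(\ell)q^{-1/24}$.
\end{itemize}
The terms $q^{-m^2\ell^2/24}$, $q^{-\ell^2/24}$, $q^{-m^2/24}$ and $q^{-m^2/(24\ell^2)}$ match directly.

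The main obstacle is the bookkeeping at $q^{-1/24}$, including the degenerate cases. The two $\pm\ell\chip(m)\chip(\ell)$ terms cancel. It remains to check
\[
-\chip(m)\chip(\ell)=-\chip(\ell)\leg{m^2}{\ell}\chip(m)-\chip(m/\ell).
\]
If $\ell\nmid m$, this holds because $\leg{m^2}{\ell}=1$ and $\chip(m/\ell)=0$. If $\ell\mid m$, then $\leg{m^2}{\ell}=0$ and $\chip(m/\ell)=\chip(m)\chip(\ell)$, since $\chip(\ell)^2=1$, so it holds again.

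When $m=\ell$, the convention $G_1=0$ is consistent, because the $q^{-1/24}$ pieces of $G_{m/\ell}$ and $G_m$ coincide. I also need to check the coincidences of exponents: $m^2=\ell^2$ when $m=\ell$, and $m^2/\ell^2=1$ in that same case. Here the principal part is computed as a formal sum and the coefficients are simply added.

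\textbf{The identity \eqref{eq:f-Hecke}.} Since $f$ is only mock modular, I pass to the completion $\widehat f$. The operator $T(\ell^2)$ acts on harmonic Maass forms and commutes with $\xi_{3/2}$ up to the factor $\ell$ in the normalization $\xi(\widehat f|T(\ell^2))=\ell\,\xi\widehat f|T^*(\ell^2)$. By \eqref{eq:eta-theta}, $\eta$ is a Hecke eigenform of weight $1/2$ with eigenvalue $\chip(\ell)(1+\ell^{-1})$ in the appropriate normalization. Multiplying by $\ell$ gives the eigenvalue $\chip(\ell)(1+\ell)$.

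Consequently $\widehat f|T(\ell^2)-\chip(\ell)(1+\ell)\widehat f$ has vanishing shadow, so it is weakly holomorphic and equal to $f|T(\ell^2)-\chip(\ell)(1+\ell)f$. Its principal part, using $a(-1)=1$, is $\ell q^{-\ell^2/24}+\chip(\ell)\leg{1}{\ell}q^{-1/24}-\chip(\ell)(1+\ell)q^{-1/24}=\ell q^{-\ell^2/24}-\ell\chip(\ell)q^{-1/24}$. This is the principal part of $\ell G_\ell$, so uniqueness finishes the proof.
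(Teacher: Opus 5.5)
Your proposal is correct and follows the paper's route: the Hecke--shadow intertwining with $\eta|T_{1/2}(\ell^2)=\chi_{12}(\ell)(1+\ell^{-1})\eta$ for the identity involving $f$, and a principal-part comparison plus the uniqueness statement of Proposition~\ref{Grid} for the identity involving $G_m$; your bookkeeping at $q^{-1/24}$, including the cases $\ell\mid m$ and $m=\ell$, is more explicit than the paper's. The one thing to tidy is your $m=\ell$ remark: the convention $G_1=0$ is consistent because the formal principal part $q^{-1/24}-\chi_{12}(1)q^{-1/24}$ of ``$G_{m/\ell}$'' vanishes, not because pieces of $G_{m/\ell}$ and $G_m$ coincide.
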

\begin{proof}
We require the weight $1/2$ version of this Hecke operator, which acts on Fourier coefficients by  sending
\[
c(n)\mapsto c(\ell^2n)+\ell^{-1}\chip(\ell)\leg{n}{\ell}c(n)
                         +\ell^{-1}c(n/\ell^2).
\]
Equation~\eqref{eq:eta-theta} directly gives
$\eta|T_{1/2}(\ell^2)=\chip(\ell)(1+\ell^{-1})\eta$.
It is well-known (see, e.g., (7.5) of \cite{HMFBook})  
that the shadow operator  $\xi_{3/2}$ intertwines with Hecke operators in the dual weights $1/2$ and $3/2$.
up to a factor of
$\ell$.
Thus,  the shadow of
$\widehat f|T(\ell^2)-\chip(\ell)(1+\ell)\widehat f$ vanishes.
This precisely says that this difference is weakly holomorphic. 
Moreover, its principal part is easily computed to be
$\ell q^{-\ell^2/24}-\ell\chip(\ell)q^{-1/24}$.
The uniqueness property of Proposition~\ref{Grid} proves \eqref{eq:f-Hecke}.

To show \eqref{eq:G-Hecke}, first apply the weight $3/2$ Hecke operator in \eqref{eq:Hecke} to the 
two terms in the principal part of $G_m$. 
The new principal part matches the right hand side of the claimed identity. 
The same uniqueness argument finishes the proof. 
Note that the convention $G_1=0$
accounts for the cancellation when $m=\ell$.
\end{proof}
We are now in position to reduce the problem of studying canonical congruence primes to question purely of weakly holomorphic, as opposed to mock or harmonic Maass, forms.

\begin{proposition}\label{prop:G-support}
If $\ell$ is a canonical congruence prime, then for every $m$ coprime to $6\ell$, we have
\begin{equation}\label{eq:G-theta}
\Theta^{(\ell-1)/2}G_m\equiv\varepsilon_\ell G_m\pmod \ell.
\end{equation}
Thus, $b_m(n)\equiv0\pmod\ell $ if $\leg{-n}{\ell}=-1$.
\end{proposition}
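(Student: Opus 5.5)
Since $\Theta$ commutes with reduction mod $\ell$ and everything is linear, the plan is to propagate \eqref{eq:theta} from $f$ to each $G_m$ via the Hecke identities of Lemma~\ref{lem:hecke-identities}, and then argue by induction on the prime factorization of $m$. Call a $q$-series $g$ (with $\ell$-integral coefficients supported on exponents $n/24$, $n\equiv 23\pmod{24}$) \emph{$\varepsilon_\ell$-good} if $\Theta^{(\ell-1)/2}g\equiv\varepsilon_\ell g\pmod\ell$. Coefficientwise, via Euler's criterion this means $c(n)\equiv0\pmod\ell$ whenever $\leg{-n}{\ell}=-1$ or $\ell\mid n$ (using $\leg{24}{\ell}^{-1}\leg{-n}{\ell}\cdot\leg{-24}{\ell}$ bookkeeping), and nothing else. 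So goodness is a pure support condition modulo $\ell$ and is preserved by $\ell$-integral linear combinations. The final claim about $b_m(n)$ then follows exactly as in Proposition~\ref{prop:theta}.

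The key step is to show that $T(p^2)$ preserves goodness for primes $p\ne\ell$, $p\ge5$. By \eqref{eq:Hecke}, the coefficient of $q^{n/24}$ in $g|T(p^2)$ is $c(p^2n)+\chip(p)\leg{-n}{p}c(n)+p\,c(n/p^2)$. If $\leg{-n}{\ell}=-1$ (or $\ell\mid n$), then $p^2n$ and $n/p^2$ have the same character value mod $\ell$, since $p^2$ is a nonzero square mod $\ell$. Hence all three terms vanish mod $\ell$. Applying this to $g=f$, which is good by Proposition~\ref{prop:theta}, identity \eqref{eq:f-Hecke} with $\ell$ replaced by $p$ shows that $pG_p$ is good. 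Since $p$ is invertible mod $\ell$, $G_p$ is good for every prime $p\nmid 6\ell$.

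For general $m$ coprime to $6\ell$, I would induct on the number of prime factors of $m$ counted with multiplicity, with $G_1=0$ as the trivial base case. Write $m=m'p$ with $p$ prime and $m'>1$. Applying \eqref{eq:G-Hecke} to $G_{m'}$ with the prime $p$ gives
\[
pG_{m'p}=G_{m'}|T(p^2)-\chip(p)\leg{m'^2}{p}G_{m'}-G_{m'/p}+p\chip(m')G_p .
\]
Every term on the right is good: the first by the Hecke stability above together with the induction hypothesis, the second and third by induction, and the last by the prime case. Dividing by $p$, which is a unit mod $\ell$, shows that $G_m$ is good.

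I expect the main obstacle to be the careful justification that "good" is exactly a support condition. Two points need checking here. First, all coefficients must be $\ell$-integral, which holds since $f$ and the $G_m$ are integral and $\Theta$ preserves integrality. Second, the factor $24^{-(\ell-1)/2}$ arising from $\Theta$ must be accounted for: it is absorbed into $\varepsilon_\ell=\leg{-24}{\ell}$ via Euler's criterion, so that $\Theta^{(\ell-1)/2}$ acts on $q^{n/24}$ as $\leg{-n}{\ell}\varepsilon_\ell$ times the identity mod $\ell$ (and as $0$ when $\ell\mid n$). The hypothesis $(m,\ell)=1$ is used precisely so that $T(\ell^2)$, which does not preserve the support condition in this way, is never needed, and so that each division by $p$ is legitimate.
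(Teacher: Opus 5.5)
Your proof is correct and is essentially the paper's argument. The paper states the Hecke step as the commutation of $T(p^2)$ with $\Theta^{(\ell-1)/2}$ modulo $\ell$ for $p\ne\ell$, which is your support-condition observation, then gets the prime case from \eqref{eq:f-Hecke} and the general case by induction on the number of prime factors via \eqref{eq:G-Hecke}, just as you do.

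One small correction to your closing remark: $T(\ell^2)$ does in fact preserve your support condition, since the terms $c(\ell^2 n)$ and $\ell\, c(n/\ell^2)$ vanish modulo $\ell$ automatically. So the only reason to avoid $p=\ell$ is that \eqref{eq:f-Hecke} produces $\ell G_\ell$, which cannot be divided by $\ell$.
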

\begin{proof}
For primes $p\ne \ell$, the Hecke operator in \eqref{eq:Hecke} commutes modulo $\ell$
with $\Theta^{(\ell-1)/2}$.
To see why, note that multiplication or division of an index by
$p^2$ does not change its quadratic character modulo $\ell$, and that the
claim is also valid at indices divisible by $\ell$.
For $m=p\geq 5$, using \eqref{eq:theta} and \eqref{eq:f-Hecke} and dividing by $p$ 
shows \eqref{eq:G-theta}.
To show  \eqref{eq:G-theta} for any $m$ coprime to $6\ell$, induct on the number of prime factors of $m$ (counted with multiplicity, say). 
The final assertion follows by the same
coefficient comparison as in Proposition~\ref{prop:theta}.
\end{proof}

\section{Coefficients and inner products of auxiliary weakly holomorphic modular forms}

Fix $n=24k-1>0$. Triangular elimination on the set of weakly holomorphic modular forms
$\{\eta,\eta j,\ldots,\eta j^k\}$ yields a unique monic polynomial
$P_n\in\Z[X]$ of degree $k$ such that
\begin{equation}\label{eq:Hn}
H_n(\tau):=\eta(\tau)P_n(j(\tau))
=q^{-n/24}-a(n)q^{1/24}+O(q^{25/24}).
\end{equation}
Write $H_n=:\sum_r h_n(r)q^{r/24}$. In particular, we have $h_n(1)=-a(n)$.

\begin{lemma}\label{lem:duality}
For $m>1$ with $(m,6)=1$, we have the coefficient duality
\begin{equation}\label{eq:duality}
h_n(m^2)=-b_m(n)-\chip(m)a(n).
\end{equation}
If $\ell$ is a canonical congruence prime and $\leg{-n}{\ell}=-1$, then for any $m$ coprime to $6\ell$ we have
\begin{equation}\label{eq:square-vanishing}
h_n(m^2)\equiv0\pmod \ell.
\end{equation}
\end{lemma}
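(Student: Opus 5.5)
The duality will come from the standard residue pairing. The product $H_n G_m$ has weight $1/2+3/2=2$ and trivial multiplier, since $\nu_\eta\overline{\nu_\eta}=1$. Both factors have exponents in $\tfrac1{24}(\text{odd})$, so the exponents of the product lie in $\tfrac1{24}\cdot 24\Z=\Z$. Hence $H_nG_m\in M_2^!(\mathrm{SL}_2(\Z))$. Every such form is $\Theta$ of a weakly holomorphic modular function, so its constant term vanishes. (Equivalently, $H_nG_m\,d\tau$ is a meromorphic differential on $X(1)$ with only one pole, at $\infty$, so its residue is zero.) Thus the plan is to compute the constant term of $H_nG_m$ and set it equal to $0$.

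The constant term only sees products $q^{r/24}\cdot q^{s/24}$ with $r+s=0$. We write $H_n=q^{-n/24}+\sum_{r>0}h_n(r)q^{r/24}$, with $h_n(1)=-a(n)$, and $G_m=q^{-m^2/24}-\chip(m)q^{-1/24}+\sum_{s>0}b_m(s)q^{s/24}$. Three pairings contribute:
\begin{itemize}
\item $q^{-n/24}$ in $H_n$ with $b_m(n)q^{n/24}$ in $G_m$, giving $b_m(n)$;
\item $h_n(m^2)q^{m^2/24}$ with $q^{-m^2/24}$, giving $h_n(m^2)$;
\item $h_n(1)q^{1/24}$ with $-\chip(m)q^{-1/24}$, giving $\chip(m)a(n)$.
\end{itemize}
If $n$ happened to equal $m^2$ or $1$, there would be an extra cross term from the two principal parts. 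This cannot occur: $n\equiv 23\pmod{24}$, while $m^2\equiv1\pmod{24}$. The constant term is therefore
\[
b_m(n)+h_n(m^2)+\chip(m)a(n)=0,
\]
which is exactly \eqref{eq:duality}. The only care needed here is this bookkeeping of exponents and signs; the one fact I am relying on is that weight-$2$ level-one weakly holomorphic forms have vanishing constant term.

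For \eqref{eq:square-vanishing}, assume $\ell$ is a canonical congruence prime and $\leg{-n}{\ell}=-1$. Proposition~\ref{prop:theta} gives $a(n)\equiv0\pmod\ell$. For $m>1$ coprime to $6\ell$, Proposition~\ref{prop:G-support} gives $b_m(n)\equiv0\pmod\ell$. The identity \eqref{eq:duality} then yields $h_n(m^2)\equiv0\pmod\ell$.

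The remaining case is $m=1$. There the claim reads $h_n(1)=-a(n)\equiv0$, which again follows from Proposition~\ref{prop:theta}. This is consistent with the convention $G_1=0$, since $-b_1(n)-\chip(1)a(n)=-a(n)$.
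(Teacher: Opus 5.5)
Your proposal is correct and follows the paper's proof: you use that the weight-$2$ level-one form $H_nG_m$ has zero constant term, read off $b_m(n)+h_n(m^2)-\chi_{12}(m)h_n(1)=0$ with $h_n(1)=-a(n)$, and then invoke Propositions~\ref{prop:theta} and~\ref{prop:G-support}, treating $m=1$ via $h_n(1)=-a(n)$. Your worry about a cross term between the two principal parts is unnecessary, since a product of two polar terms is always polar.
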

\begin{proof}
The product $H_nG_m$ is a weakly holomorphic modular form of weight two
on $\mathrm{SL}_2(\Z)$, so its constant term is zero. 
Plugging in the principal parts then gives
\[
0=[q^0](H_nG_m)=b_m(n)+h_n(m^2)-\chip(m)h_n(1),
\]
proving \eqref{eq:duality}. Propositions~\ref{prop:theta} and
\ref{prop:G-support} prove \eqref{eq:square-vanishing} for $m>1$. The case when
$m=1$ is the relation $h_n(1)=-a(n)\equiv0\pmod\ell$.
\end{proof}

We now require the regularized Petersson  inner product obtained by integrating over a 
truncated
fundamental domain 
 (see, e.g., Section~5.1 of \cite{HMFBook}), given by
\[
\langle H,\eta\rangle^{\reg}
:=\lim_{T\to\infty}\int_{\mathcal F_T}
       H(\tau)\overline{\eta(\tau)}v^{1/2}\,\frac{du\,dv}{v^2},
\]
where $\tau=:u+iv$ ($u,v\in\mathbb R$) and $\mathcal F_T:=\{\tau\in\mathbb H \colon |u|\leq \frac12, |\tau|\geq1, v\leq T\}$.
This limit exists for the weakly holomorphic modular forms $H$ of weight $1/2$ we need.
A standard Stokes Theorem argument gives the following result.
\begin{lemma}\label{lem:pairing}
If $H\in M^!_{1/2}(\nu_\eta)$, then
\begin{equation}\label{eq:pairing}
\frac{3\sqrt 6}{\pi}\cdot\langle H,\eta\rangle ^{\reg}=[q^0](Hf).
\end{equation}
In particular, $\langle H_n,\eta\rangle^{\reg}=0$.
\end{lemma}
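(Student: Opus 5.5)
The plan is the usual Stokes argument. The weight $1/2$ form $H$ and the weight $3/2$ completion $\widehat f$ with multiplier $\overline{\nu_\eta}$ pair to an $\mathrm{SL}_2(\Z)$-invariant weight $2$ object. So I will study the $1$-form $\omega:=H(\tau)\widehat f(\tau)\,d\tau$, which is invariant under $\mathrm{SL}_2(\Z)$ because the multipliers $\nu_\eta$ and $\overline{\nu_\eta}$ cancel and the weights add to $2$. Since $H$ is holomorphic, $d\omega=\partial_{\bar\tau}\bigl(H\widehat f\bigr)\,d\bar\tau\wedge d\tau = H\,\partial_{\bar\tau}\widehat f\,d\bar\tau\wedge d\tau$. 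By the definition of $\xi_{3/2}$ and the shadow identity \eqref{eq:shadow}, we have $\partial_{\bar\tau}\widehat f=\frac{1}{-2i}v^{-3/2}\overline{\xi_{3/2}\widehat f}=\frac{i}{2}\cdot\frac{3\sqrt6}{\pi}v^{-3/2}\overline{\eta}$. Combined with $d\bar\tau\wedge d\tau=2i\,du\wedge dv$, this gives $d\omega=-\frac{3\sqrt6}{\pi}H\overline{\eta}v^{1/2}\frac{du\,dv}{v^2}$, with the sign to be fixed by orientation.

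Next I will apply Stokes' theorem on $\mathcal F_T$. The side edges $u=\pm\frac12$ cancel by invariance under $\tau\mapsto\tau+1$. The two halves of the unit arc cancel by invariance under $\tau\mapsto-1/\tau$, because $\omega$ is an invariant $1$-form. So only the top edge $v=T$ remains, traversed from $u=\frac12$ to $-\frac12$. This yields
\[
\int_{\mathcal F_T}d\omega=-\int_{-1/2}^{1/2}H(u+iT)\widehat f(u+iT)\,du ,
\]
up to orientation, and hence $\frac{3\sqrt6}{\pi}\int_{\mathcal F_T}H\overline\eta v^{1/2}\frac{du\,dv}{v^2}=\int_{-1/2}^{1/2}H\widehat f\,du\big|_{v=T}$.

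The main obstacle is evaluating this horizontal integral as $T\to\infty$. I will write $\widehat f=f+f^-$, where $f^-$ is the nonholomorphic part. It is a sum of terms $c_m\,\Gamma(-\tfrac12,4\pi m^2 v/24)\,q^{-m^2/24}$ times constants, coming from $\eta=\sum\chip(m)q^{m^2/24}$, so it decays like $e^{-2\pi m^2T/24}$ times powers of $T$. The integral over $u$ picks out the constant term in $e^{2\pi i u}$. For the holomorphic part this is exactly $[q^0](Hf)$, independent of $T$; only finitely many terms contribute, since $H$ has a finite principal part and the exponents lie in $\frac{1}{24}\Z$ with total exponent $0$.

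For $f^-$, the constant term pairs a coefficient of $H$ at $q^{r/24}$ with $f^-$ at $q^{-m^2/24}$, which requires $r=m^2$. That term is $h(m^2)e^{-2\pi m^2T/24}$ times $\Gamma(-\tfrac12,\pi m^2T/6)e^{2\pi m^2T/24}$-type factors, which behave like $T^{-3/2}e^{-\pi m^2T/6}$ up to constants and tend to $0$. Summing over $m$ is controlled by the polynomial growth of the coefficients $h(r)$ against this exponential decay, so the whole contribution vanishes in the limit. I would check the normalization constants at this step carefully, but the decay alone suffices. Letting $T\to\infty$ gives both the existence of the regularized limit and \eqref{eq:pairing}.

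For the final claim, I take $H=H_n$. Its principal part is $q^{-n/24}$ only, and $a(n)$ appears in $f$ at $q^{n/24}$. The constant term of $H_nf$ is therefore $a(n)+h_n(1)a(-1)$, plus contributions $h_n(r)a(-r)$ for the other $r\le 0$, of which there are none. Since $f=q^{-1/24}+\cdots$ we have $a(-1)=1$, and \eqref{eq:Hn} gives $h_n(1)=-a(n)$. So $[q^0](H_nf)=a(n)-a(n)=0$, and hence $\langle H_n,\eta\rangle^{\reg}=0$.
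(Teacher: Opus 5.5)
Your proposal is correct and follows essentially the same route as the paper's proof. Both use the invariant $1$-form $H\widehat f\,d\tau$, cancel the paired boundary pieces of $\mathcal F_T$, and read off $[q^0](Hf)$ from the top edge plus decaying incomplete-gamma terms at the square indices $r=m^2$; the final claim is the same $a(n)-a(n)=0$ computation. One small correction: the coefficients $h(r)$ of a weakly holomorphic form with nonzero principal part grow like $\exp(C\sqrt r)$, not polynomially, but the relevant terms are $h(m^2)\,\Gamma(-\tfrac12,\pi m^2T/6)\ll e^{Cm}e^{-\pi m^2T/6}$, so the sum still tends to $0$ and your conclusion stands.
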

\begin{proof}
From \eqref{eq:shadow}, we have
$\partial_{\bar\tau}\widehat f=-(3\sqrt 6/\pi)\cdot y^{-3/2}\bar\eta/(2i)$.
Since $H$ is holomorphic on $\HH$,
\[
d(H\widehat f\,d\tau)
  =-\frac{3\sqrt 6}{\pi}\,H\bar\eta\,v^{-3/2}\,du\,dv.
\]
The differential $H\widehat f\,d\tau$ is invariant. The paired side
boundaries of $\mathcal F_T$ cancel in Stokes' theorem, and the top
boundary is traversed from right to left. It follows that
\[
\frac{3\sqrt 6}{\pi}\cdot\int_{\mathcal F_T}H\bar\eta\,v^{-3/2}\,du\,dv
=\int_{-1/2}^{1/2}H(u+iT)\widehat f(u+iT)\,du.
\]
The contribution of the holomorphic part is the constant term $[q^0](Hf)$.
The nonholomorphic part has negative-square indices, with incomplete
gamma factors. Only a positive coefficient of $H$ can pair with such an
index to contribute a constant term, and these contributions tend to
zero exponentially. The finitely many polar terms of $H$ have no
constant-term partner in this nonholomorphic part. This proves the
limit and \eqref{eq:pairing}.
Finally, \eqref{eq:Hn} and $a(-1)=1$ give
$[q^0](H_nf)=a(n)-a(n)=0$.
\end{proof}

\section{Class polynomials}

From now on suppose that $n$ is squarefree and $n\equiv23\pmod{24}$.
Then $-n$ is a fundamental discriminant, and both $2$ and $3$ split in the imaginary quadratic field
$K:=\Q(\sqrt{-n})$. For $r\in\{1,5,7,11\}$ let
$\mathcal Q_{n,r}$ be a set of representatives of $\Gamma_0(6)$-classes of primitive
positive definite integral forms $[a,b,c]$ satisfying
\[
b^2-4ac=-n,\qquad 6\mid a,\qquad b\equiv r\pmod{12}.
\]
The corresponding CM points are $\tau_Q:=(-b+i\sqrt n)/(2a)$, and the corresponding Heegner divisor is
$D_{n,r}:=\sum_{Q\in\mathcal Q_{n,r}}[\tau_Q]$.
Each set $Q_{n,r}$ has $h(-n)$ classes.
The reason we choose the four values of $r$ arises from picking choices of primes above $2$ and $3$.

Consider the modular function
\begin{equation}\label{eq:t}
t(\tau):=\frac{\eta(\tau)^5\eta(3\tau)}{\eta(2\tau)\eta(6\tau)^5}
=q^{-1}-5+O(q).
\end{equation}
Define the class polynomials
\begin{align}
A_n(X)&:=\prod_{r\in\{1,11\}}\prod_{Q\in\mathcal Q_{n,r}}
                   (X-t(\tau_Q)),\label{eq:A}\\
B_n(X)&:=\prod_{r\in\{5,7\}}\prod_{Q\in\mathcal Q_{n,r}}
                   (X-t(\tau_Q)).\label{eq:B}
\end{align}

We first show the following properties.
\begin{lemma}\label{lem:integral-coordinate}
The function $t$ identifies $X_0(6)_{\Z[1/6]}$ with
$\mathbb P^1_{\Z[1/6]}$. Moreover,
$A_n,B_n\in\Z[1/6][X]$, and both are monic of degree $2h(-n)$.
\end{lemma}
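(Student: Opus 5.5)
The plan has three steps. First, $t$ is a Hauptmodul for $\Gamma_0(6)$ of the right integral type. Second, monicity and degree follow by counting CM points. Third, integrality and rationality of the coefficients follow from Shimura reciprocity, with denominators controlled at primes away from $6$.

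\emph{The coordinate $t$.} I would show that $t$ is a Hauptmodul for $\Gamma_0(6)$, a genus-zero group with four cusps $\infty,0,1/2,1/3$. Ligozat's criteria show that $\eta(\tau)^5\eta(3\tau)/(\eta(2\tau)\eta(6\tau)^5)$ is a modular function on $\Gamma_0(6)$. The exponents satisfy $5\cdot1+1\cdot3-2-5\cdot6=-24$, so the order at $\infty$ is $-1$. Computing the orders at the other cusps, I expect $t$ to have a simple pole at $\infty$ and to be holomorphic elsewhere, with at most one zero. This makes $t:X_0(6)_\C\to\mathbb P^1$ an isomorphism.

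For the integral statement over $\Z[1/6]$, I would use that $t$ has integral $q$-expansion at $\infty$. Its values at the other cusps, and its expansions there, are obtained via Atkin--Lehner involutions $W_2,W_3,W_6$, which permute eta quotients of this type up to constants that are powers of $2$ and $3$. Hence $t$ is a rational function on the smooth model $X_0(6)_{\Z[1/6]}$ (Deligne--Rapoport/Katz--Mazur), by the $q$-expansion principle. Since its fibers over every geometric point of $\operatorname{Spec}\Z[1/6]$ have degree $1$, because the pole divisor stays a single section, it gives an isomorphism to $\mathbb P^1_{\Z[1/6]}$. The main obstacle is this integral-model step, specifically checking that the pole divisor $\infty$ does not collide with zeros in characteristic $p\ge5$.

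\emph{Degree and monicity.} Monicity is automatic from the definitions (\ref{eq:A}), (\ref{eq:B}). For the degree, each $\mathcal Q_{n,r}$ has $h(-n)$ classes, as stated, so each polynomial has degree $2h(-n)$. One should still confirm that the $\tau_Q$ are honest points of $Y_0(6)$, not cusps, so that each factor $X-t(\tau_Q)$ is finite. This holds because $t$ has its only pole at $\infty$.

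\emph{Rationality.} The pairing $r\leftrightarrow$ choices of ideals $\mathfrak n$ with $\mathcal O_K/\mathfrak n\cong\Z/6$, that is, primes above $2$ and $3$, should be made explicit. The $\Gamma_0(6)$-classes with $b\equiv r\pmod{12}$ correspond to Heegner points $(\mathcal O_K,\mathfrak n,[\mathfrak a])$ for a fixed $\mathfrak n$, via $b\bmod 12$ determining $b\bmod 2a$-type residues. Complex conjugation sends $b\mapsto -b$, so it swaps $r=1\leftrightarrow11$ and $5\leftrightarrow7$, which correspond to $\mathfrak n\leftrightarrow\bar{\mathfrak n}$.

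By Shimura reciprocity, $\operatorname{Gal}(H/K)$ permutes the $h(-n)$ points with fixed $\mathfrak n$ simply transitively, where $H$ is the Hilbert class field. Since $t$ is defined over $\Q$, each inner product is in $K[X]$. Complex conjugation swaps the two inner products in $A_n$, and likewise in $B_n$, so $A_n,B_n\in\Q[X]$.

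\emph{Integrality.} Each $t(\tau_Q)$ is integral over $\Z[1/6]$. Indeed, $t$ satisfies a monic polynomial over $\Z[j]$ of degree $[\mathrm{SL}_2(\Z):\Gamma_0(6)]=12$, because $t$ has poles only at cusps and integral expansions there after inverting $6$. Moreover $j(\tau_Q)$ is an algebraic integer. Therefore the coefficients of $A_n$ and $B_n$ lie in $\Q\cap\overline{\Z}[1/6]=\Z[1/6]$.
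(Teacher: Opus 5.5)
Your proposal is correct. For the Hauptmodul, the integral model, the degree and the rationality it runs parallel to the paper, with two small points. First, carry out the Ligozat computation rather than ``expecting'' its outcome: the orders at $0,1/2,1/3,\infty$ are $1,0,0,-1$. Second, the ``collision'' you single out as the main obstacle is not really one. The fibers over $\Z[1/6]$ are irreducible, so integrality of $t=q^{-1}(1-5q+\cdots)$ at $\infty$ alone excludes vertical components of $\operatorname{div}(t)$; this is exactly the paper's argument. Hence on each fiber $t$ has at most a simple pole, at $[\infty]$, and is nonconstant because its reduced $q$-expansion still begins with $q^{-1}$, so it has degree one.

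The genuine difference is how you get integrality of the CM values. The paper derives it from the model: CM curves have potentially good reduction above $\ell>3$ and the cyclic $6$-subgroup extends, so $\tau_Q$ specializes into the non-cuspidal locus, where $t$ is regular. You instead use the classical class-invariant argument. The twelve conjugates $t\circ\gamma$ have expansions integral away from $6$, which is where your Atkin--Lehner computation is really needed. So $t$ is a root of a monic polynomial over $\Z[1/6][j]$, and singular moduli are algebraic integers.

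Your route is independent of the integral model and elementary once integrality of $j(\tau_Q)$ is granted. Since the Atkin--Lehner constants are in fact $-8$, $-9$, $72$, it even makes $t$ integral over $\Z[j]$ and puts $A_n,B_n$ in $\Z[X]$. The paper's route is shorter once the model is in hand, and it sets up precisely the reduction of CM points to pairs $(E,C)$ over $\overline{\F}_\ell$ used in Proposition~\ref{prop:separation}.
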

\begin{proof}
As $t$ is an eta quotient, following Ligozat~\cite{Ligozat}, 
we can work out its orders at the cusps. 
We find that $t$ has weight $0$ and 
trivial character.
Indeed, for the exponents $(5,-1,1,-5)$ indexed by $1,2,3,6$, the two
congruence sums are $-24$ and $24$, and the character parameter is
$2^{-6}3^{-4}$, a perfect square. The cusp orders at $0,1/2,1/3,\infty$ are,
respectively $1,0,0,-1$. The eta function has no zeros on $\HH$.
Thus, on the generic fiber of $X_0(6)_{\Z[1/6]}$,
$\operatorname{div}(t)=[0]-[\infty]$, and $t$ has degree one.

Here is also the integral model justification needed for specialization.
Away from $6$, the compactified modular curve is smooth with geometrically
connected fibers, and its four cusp sections are disjoint. The fibers
are smooth curves of genus zero, and hence irreducible. A possible vertical
component of $\operatorname{div}(t)$ would be an entire fiber. It has
multiplicity zero, as seen in the formal neighborhood of $\infty$ since 
$t=q^{-1}u(q)$ for $u\in1+q\Z[[q]]$. This shows that $\operatorname{div}(t)=[0]-[\infty]$ on the model over $\Z[1/6]$. The sections $1,t$ of
$\mathcal O([\infty])$ have no common zero and give a morphism to
$\mathbb P^1$ of degree one on every geometric fiber. This is an
isomorphism.

The CM ideal-class action over $K$ preserves each orientation $r$ and 
complex conjugation changes $r$ to $-r$. Thus, both polynomials 
$A_n(X)$ and $B_n(X)$ have rational coefficients. At a prime above
any $\ell>3$, a CM elliptic curve has potentially good reduction, and its
cyclic subgroup of order $6$ extends after a finite extension. The CM
point therefore specializes to the interior of this integral model.
Since $t$ has its only pole at the cusp $\infty$, its CM values are
integral at all such primes. Rationality proves the claimed coefficient
ring. The degree follows from directly from  $|Q_{n,r}|=h(-n)$.
\end{proof}
Using Bruinier-Ono's theory of Borcherds products \cite{BO}, we next show the following.
\begin{theorem}\label{thm:product}
For every squarefree $n\equiv23\pmod{24}$,
\begin{equation}\label{eq:product}
\frac{A_n(t(\tau))}{B_n(t(\tau))}
=\prod_{m\ge1}(1-q^m)^{\chip(m)h_n(m^2)}.
\end{equation}
The identity holds analytically near $q=0$, and as an identity of
formal power series with coefficients integral away from $6$.
\end{theorem}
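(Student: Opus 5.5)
We will apply Bruinier--Ono's generalized Borcherds lift \cite{BO} to the weight $1/2$ form $H_n=\eta P_n(j)\in M^!_{1/2}(\nu_\eta)$. The map $\tau\mapsto 24\tau$ sends $\nu_\eta$-forms to forms on $\Gamma_0(576)$ in the Kohnen-type plus space. In the vector-valued language, the Weil representation for the lattice of signature $(1,2)$ and level $6$ has the twist by $\chip$, i.e.\ the genus character attached to the discriminant $12$. In the Bruinier--Ono setup, the twisted Borcherds product is
\[
\Psi(\tau,H_n)=\prod_{m\ge1}\prod_{b\bmod 12}\bigl(1-\zeta_{12}^{b}q^m\bigr)^{\leg{12}{b}c(m^2)}.
\]
Because $\chip$ is even and real and the exponents come from $\chip(m)h_n(m^2)$, this collapses to $\prod_m(1-q^m)^{\chip(m)h_n(m^2)}$, up to a constant and a power of $q$. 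The first task is to set up this dictionary carefully, identifying $h_n(r)$ with the components $c(r/24)$ of the vector-valued form. Their theorem then gives the following facts: $\Psi$ is a meromorphic modular function on $\Gamma_0(6)$, possibly with a unitary character of finite order; its divisor on $X_0(6)$ is the twisted Heegner divisor $\sum_r\chi(r)D_{n,r}$ coming from the single principal term $q^{-n/24}$; and the Weyl vector, i.e.\ the leading $q$-power, is governed by a regularized pairing of $H_n$ against the theta/Eisenstein object.

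The second step is to compute the divisor. The genus character assigns $+1$ to the orientations $r\equiv\pm1\pmod{12}$ and $-1$ to $r\equiv\pm5\pmod{12}$. This is exactly $\chip(r)$, matching the split into $A_n$ and $B_n$. Hence
\[
\operatorname{div}\Psi=D_{n,1}+D_{n,11}-D_{n,5}-D_{n,7}+(\text{cusps}).
\]
The cusp contributions must vanish. Bruinier--Ono express the order at each cusp via a regularized inner product of $H_n$ with the weight $1/2$ unary theta function, which here is $\eta$ itself (via \eqref{eq:eta-theta}). Lemma~\ref{lem:pairing} gives $\langle H_n,\eta\rangle^{\reg}=0$, so the Weyl vector and all cusp orders vanish; the Atkin--Lehner symmetry of $X_0(6)$ moves the other cusps to $\infty$. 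Also, $A_n(t)/B_n(t)$ has degree $0$ in $t$ with both factors monic, so it tends to $1$ at $\infty$ and has divisor exactly the above Heegner combination.

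The final step compares $\Psi$ with $A_n(t)/B_n(t)$. Their quotient has no zeros or poles on the genus zero curve $X_0(6)$ (Lemma~\ref{lem:integral-coordinate}), so it is constant. A possible finite-order character is ruled out because the product expansion is a $q$-series in integer powers with constant term $1$. That constant term is $1$ on both sides, so equality holds near $q=0$. Integrality away from $6$ follows from Lemma~\ref{lem:integral-coordinate} ($A_n,B_n\in\Z[1/6][X]$, $t\in q^{-1}\Z[[q]]$), and $B_n(t)^{-1}$ is a unit power series since $B_n(t)=q^{-2h}(1+O(q))$. The exponents $h_n(m^2)$ are integers, so the product side is integral as well.

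I expect the main obstacle to be the first step: matching normalizations between the scalar form $H_n$ with multiplier $\nu_\eta$ and the vector-valued input of \cite{BO}. This includes pinning down the genus character sign convention, so that the $+$ orientations are $\{1,11\}$ rather than $\{5,7\}$, and verifying that the multiplicities are $1$ and not $2$ (since $\pm b$ both occur). I would check these against the smallest case $n=23$, $h(-23)=3$, by comparing the first few coefficients of each side.
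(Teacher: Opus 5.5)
Your outline matches the paper's: lift $H_n$ by a Borcherds product, kill the Weyl vector with Lemma~\ref{lem:pairing}, compare divisors with $A_n(t)/B_n(t)$, and normalize at $\infty$. However, \emph{the first step uses the wrong product, and the claimed collapse is false.} The expression $\prod_{b\bmod 12}(1-\zeta_{12}^bq^m)^{\leg{12}{b}c}$ is the genus-character \emph{twisted} Borcherds product with $\Delta=12$. Its inner factor is
\[
\frac{(1-\zeta_{12}x)(1-\zeta_{12}^{-1}x)}{(1-\zeta_{12}^{5}x)(1-\zeta_{12}^{-5}x)}
=\frac{1-\sqrt3\,x+x^2}{1+\sqrt3\,x+x^2}=1-2\sqrt3\,x+O(x^2),
\]
which is not $(1-x)^{\pm1}$. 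Even taking your formula at face value, its $q$-coefficient is $-2\sqrt3$ times an integer. So no constant or power of $q$ turns it into $\prod_m(1-q^m)^{\chip(m)h_n(m^2)}$.

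The twisted theorem of \cite{BO} is also the wrong tool. There the exponent of the $q^m$ factor is the coefficient of index $|\Delta|m^2/4N=m^2/2$, and these all vanish for $H_n$, whose exponents are $\equiv 1/24\pmod 1$. Its divisor is a $\chi_\Delta$-twisted Heegner divisor of discriminant $-12n$, not $D_{n,1}+D_{n,11}-D_{n,5}-D_{n,7}$. The signs $\chip(r)$ do not come from a twist at all.

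What is needed is the \emph{untwisted} product applied to a vector-valued lift, as in the paper:
\begin{enumerate}
\item Put $v=\sum_{r\bmod 12}\chip(r)\mathfrak e_r$. The Gauss sum $\sum_r\chip(r)\e(-rs/12)=\sqrt{12}\chip(s)$ shows that $\mathbf H_n:=H_nv\in M^!_{1/2,\rho_6}$.
\item The $\mathfrak e_m$-component of $\mathbf H_n$ has coefficient $\chip(m)h_n(m^2)$ at $q^{m^2/24}$. This is exactly the exponent in \eqref{eq:product}.
\item The principal part $q^{-n/24}v$ puts the sign $\chip(r)$ on the Heegner cycle of orientation $r$.
\item The vector $v$ is a $\pm1$-eigenvector of each Atkin--Lehner permutation $\sigma_c$, and contracting the theta function with $v$ gives $2\eta$. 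Hence $\langle\mathbf H_n,\theta^{\sigma_c}\rangle^{\reg}=\pm2\langle H_n,\eta\rangle^{\reg}=0$ at all four cusps. This is the precise form of your Atkin--Lehner remark.
\end{enumerate}

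Two smaller points also need repair.
\begin{itemize}
\item \textbf{Multiplicity.} This is settled by noting that each Bruinier--Ono cycle for component $r$ counts $\pm\lambda$ with weight $1/2$, so it equals $\tfrac12(D_{n,r}+D_{n,-r})$. A numerical check at $n=23$ cannot prove this for all $n$.
\item \textbf{Character.} Integral $q$-exponents only show the character is trivial on $\Gamma_\infty$. Instead, observe that the quotient of the Borcherds product by $A_n(t)/B_n(t)$ has unitary character and no zeros or poles. The logarithm of its absolute value is then harmonic on the compact curve, hence constant. This forces the quotient to be constant and the character to be trivial.
\end{itemize}
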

\begin{proof}
We give the representation, cusp, and divisor normalizations explicitly.
Let $D=:\Z/12\Z$ with quadratic form $Q(r):=r^2/24$. For its standard
basis $\mathfrak e_r$, the Weil representation $\rho_6$ satisfies
\[
\rho_6(T)\mathfrak e_r=\e(r^2/24)\mathfrak e_r,\qquad
\rho_6(S)\mathfrak e_r=\frac{\e(-1/8)}{\sqrt{12}}
       \sum_{s\pmod{12}}\e(-rs/12)\mathfrak e_s,
\]
where $\e(x):=e^{2\pi ix}$. Put
\[
v:=\mathfrak e_1-\mathfrak e_5-\mathfrak e_7+\mathfrak e_{11}
 =\sum_{r\pmod{12}}\chip(r)\mathfrak e_r.
\]
Using the quadratic Gauss sum
$\sum_{r\pmod{12}}\chip(r)\e(-rs/12)=\sqrt{12}\chip(s)$
we find that $\mathbf H_n:=H_nv$ belongs to $M^!_{1/2,\rho_6}$.
It has constant term zero and 
principal part $q^{-n/24}v$.

Consider the theta function
\[
\theta(\tau):=\sum_{r\pmod{12}}\sum_{b\equiv r\pmod{12})}q^{b^2/24}
                           \mathfrak e_r.
\]
Pointwise contraction with $v$ gives 
\[
\theta_1(\tau)-\theta_5(\tau)-\theta_7(\tau)+\theta_{11}(\tau)
=\sum_{b\in\mathbb Z}\chi_{12}(b)q^{b^2/24}
=
2\eta(\tau)\] 
by
\eqref{eq:eta-theta}. Define Atkin--Lehner permutations $\sigma_c$, for
$c\mid6$, by
\[
\sigma_c(r)\equiv-r\pmod{2c},\qquad
\sigma_c(r)\equiv r\pmod{12/c}.
\]
The vector $v$ is an eigenvector of $\sigma_c$ with eigenvalues $+1,-1,-1,+1$ for $c=1,2,3,6$,
respectively. 
Thus, each regularized inner product
$\langle\mathbf H_n,\theta^{\sigma_c}\rangle^{\reg}=\pm2\langle H_n,\eta\rangle^{\reg}$. By Lemma~\ref{lem:pairing}, this is  zero.

In order to apply Bruinier-Ono's \cite[Theorem~6.1]{BO}, we need to compute a quantity known as a Weyl vector. 
By 
\cite[equation~(11)]{BS}, this is 
\[
\rho_{\mathbf H_n,1/c}
 =\frac{\sqrt6}{8\pi}\langle\mathbf H_n,\theta^{\sigma_c}\rangle^{\reg}=0.
\]
Since $6$ is squarefree, this represents all cusps. 

The Borcherds product theorem \cite[Theorem~6.1]{BO} now produces a
weight-zero meromorphic form with unitary character,
\[
\mathcal B_n(\tau)
:=\prod_{m\ge1}(1-q^m)^{\chip(m)h_n(m^2)},
\]
having no cusp zeros or poles. Its divisor is
\begin{equation}\label{eq:CM-divisor}
D_{n,1}+D_{n,11}-D_{n,5}-D_{n,7}.
\end{equation}
For  checking multiplicities, note that the the Heegner cycle for a single component $r$ in \cite[Section~5]{BO} includes both
signs of the corresponding norm vector.
For $-n<-4$, the stabilizer has order two. In the convention of
positive forms used here, that cycle is
$\tfrac12(D_{n,r}+D_{n,-r})$. Summing the four components of $v$ gives
\eqref{eq:CM-divisor}, with multiplicity one.

The rational function $A_n(t)/B_n(t)$ has the same divisor,
and has neither a zero nor a pole at any cusp. It is $1$ at $\infty$
since its numerator and denominator are monic of equal degree.
The quotient $\mathcal B_n/(A_n(t)/B_n(t))$ is nonvanishing and
holomorphic, including at the cusps, with unitary character. The logarithm
of its absolute value descends to a harmonic function on the compact
curve. It is constant, and the open mapping theorem makes the quotient
itself constant. Its leading coefficient at $\infty$ is $1$.
This proves \eqref{eq:product} and also proves that the character is
trivial. Integrality follows form this product formula.
\end{proof}

\section{Canonical congruences and class polynomials}
The main application of these polynomials for us is the following result.
\begin{proposition}\label{prop:W-zero}
Let $\ell\ge5$ be a canonical congruence prime. Suppose that $n$ is squarefree,
$n\equiv23\pmod{24}$, and $\leg{-n}{\ell}=-1$. Then we have the congruence
$W_n:=A_n'B_n-A_nB_n'\equiv0\pmod \ell$.
\end{proposition}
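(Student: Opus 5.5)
The plan is to combine the product formula of Theorem~\ref{thm:product} with the square-index vanishing of Lemma~\ref{lem:duality}. Then we read off the congruence for $W_n$ from the logarithmic derivative.

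First, Lemma~\ref{lem:duality} applies because $\leg{-n}{\ell}=-1$. For every $m$ coprime to $6\ell$, including $m=1$, it gives $h_n(m^2)\equiv0\pmod\ell$. The exponents $\chip(m)h_n(m^2)$ in \eqref{eq:product} with $\chip(m)\neq0$ and $\ell\nmid m$ are therefore all divisible by $\ell$.

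For indices $m=\ell m'$, we use $(1-q^{\ell m'})\equiv(1-q^{m'})^\ell\pmod\ell$, which holds in $\Z_{(\ell)}[[q]]$. With this, every factor of the product is congruent to an $\ell$-th power. Collecting terms, we get $\mathcal B_n\equiv F^\ell\pmod\ell$ for some $F\in 1+q\Z_{(\ell)}[[q]]$; if needed we take $F$ as a formal $\ell$-adically convergent infinite product. Applying $\Theta$ gives $\Theta\mathcal B_n=\ell F^{\ell-1}\Theta F\equiv0\pmod\ell$.

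Alternatively, and more cleanly, we take the logarithmic derivative:
\[
\frac{\Theta\mathcal B_n}{\mathcal B_n}=-\sum_{m\ge1}\chip(m)h_n(m^2)\sum_{d\ge1}mq^{md}.
\]
Each term with $\ell\nmid m$ has coefficient divisible by $\ell$ through $h_n(m^2)$. Each term with $\ell\mid m$ is divisible by $\ell$ through the factor $m$. So $\Theta\mathcal B_n/\mathcal B_n\equiv0\pmod\ell$, and since $\mathcal B_n\in1+q\Z[1/6][[q]]$ is a unit, $\Theta\mathcal B_n\equiv0\pmod\ell$.

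On the other side, by the chain rule,
\[
\Theta\frac{A_n(t)}{B_n(t)}=\frac{W_n(t)}{B_n(t)^2}\,\Theta t .
\]
Both $B_n(t)$ and $\Theta t=-q^{-1}+O(q)$ are units, up to the power $q^{-1}$ in the Laurent ring over $\Z_{(\ell)}$. Indeed $t=q^{-1}-5+\cdots$ has integral coefficients, and $B_n$ is monic with $\ell$-integral coefficients by Lemma~\ref{lem:integral-coordinate}, so $B_n(t)=q^{-2h}(1+O(q))$. We conclude that $W_n(t(\tau))\equiv0\pmod\ell$ as a Laurent series in $q$.

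The step I expect to need the most care is passing from $W_n(t)\equiv0$ to $W_n\equiv0$ as a polynomial. This follows from a leading-term argument. Write $W_n=\sum_k w_kX^k$ with $w_k\in\Z_{(\ell)}$, and suppose not all $w_k\equiv0$. Let $K$ be the largest index with $w_K\not\equiv0$. Then
\[
W_n(t)\equiv w_Kq^{-K}+(\text{higher powers of }q)\pmod\ell,
\]
because $t^k=q^{-k}(1+O(q))$ with integral coefficients. This contradicts $W_n(t)\equiv0$. Hence $W_n\equiv0\pmod\ell$ coefficientwise. Here $W_n\in\Z[1/6][X]$ and $\ell\ge5$, so reduction mod $\ell$ makes sense.

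The only subtlety to check is that all manipulations take place in $\Z_{(\ell)}((q))$. This is guaranteed by the integrality statement of Theorem~\ref{thm:product} and by Lemma~\ref{lem:integral-coordinate}.
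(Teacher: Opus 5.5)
Your proof is correct and takes essentially the same route as the paper: the divisibility of the exponents $\chip(m)h_n(m^2)$ from Lemma~\ref{lem:duality} makes $\Theta$ of the product vanish modulo $\ell$, after which the chain rule and the leading-term argument for $t=q^{-1}-5+O(q)$ force $\overline W_n=0$. Your logarithmic-derivative bookkeeping is equivalent to the paper's observation that the reduction lies in $\F_\ell[[q^\ell]]$, and the one slip is cosmetic: the infinite product defining $F$ converges $q$-adically, not $\ell$-adically, though the logarithmic-derivative version makes $F$ unnecessary anyway.
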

\begin{proof}
By Lemma~\ref{lem:duality}, every exponent
$\chip(m)h_n(m^2)$ in \eqref{eq:product} is divisible by $\ell$ when
$\ell\nmid m$: it is zero if $(m,6)>1$, and otherwise
\eqref{eq:square-vanishing} applies. In characteristic $\ell$, an integral
power $(1-q^m)^{\ell e}$ is equal to $(1-q^{\ell m})^e$. 
The factors with $\ell\mid m$ only involve terms $q^\ell$. Thus,
\[
\overline{A_n(t)/B_n(t)}\in\F_\ell[[q^\ell]],
\]
and its image under $\Theta$ is zero. The chain rule gives
\[
0=\frac{\overline W_n(\bar t)}{\overline B_n(\bar t)^2}\Theta(\bar t).
\]
The series $\Theta(\bar t)$ has leading term $-q^{-1}$, so is nonzero.
Substiting $T\mapsto\bar t=q^{-1}-5+O(q)$ embeds
$\F_\ell(T)$ into $\F_\ell((q))$; the leading power of a nonzero polynomial
cannot cancel. Thus, we have $\overline W_n=0$, as claimed.
\end{proof}


For \(\ell>3\), suppose that the elliptic curve \(E/\overline{\mathbb F}_\ell\) is supersingular, and let \(C\subset E\) be cyclic of order \(6\).
For such a pair, define
\[
\End(E,C):=\{\alpha\in\End(E):\alpha(C)\subseteq C\}.
\]
The ring $\End(E,C)$ is an Eichler order of level $6$ in the quaternion algebra
$B_{\ell,\infty}$ (see, e.g., \cite[Theorem~3.7]{Arpin}).
We also require the norm and trace on $\End(E,C)$:
\[
N(\alpha):=\alpha\alpha^\ast,\quad \quad \mathrm{Tr}(\alpha):=\alpha+\alpha^*,
\]
where $\alpha^*$ denotes its quaternion conjugate.
We require the following estimate.
\begin{lemma}\label{lem:volume}
Let $\ell>3$, let $E/\overline{\F}_\ell$ be supersingular, and let
$C\subset E$ be cyclic of order $6$. If
$\alpha,\beta\in\End(E,C)$ do not commute, then
\begin{equation}\label{eq:volume}
6\ell\le4N(\alpha)N(\beta).
\end{equation}
\end{lemma}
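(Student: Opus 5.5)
The plan is to compare the covolume of the rank-four lattice $\End(E,C)$ with the volume spanned by the four elements $1,\alpha,\beta,\alpha\beta$. By \cite[Theorem~3.7]{Arpin}, $\mathcal O:=\End(E,C)$ is an Eichler order of level $6$ in $B_{\ell,\infty}$. Its reduced discriminant is therefore $6\ell$. Equivalently, the Gram determinant of the trace form $(x,y)\mapsto\mathrm{Tr}(xy^\ast)$ on $\mathcal O$ equals $(6\ell)^2$. The norm form $N$ is positive definite, since $B_{\ell,\infty}$ is definite. Because $\alpha$ and $\beta$ do not commute, the elements $1,\alpha,\beta,\alpha\beta$ are linearly independent over $\Q$. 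Indeed, $\Q(\alpha)$ is a quadratic field (or $\alpha\in\Q$, which would commute), and the noncommuting $\beta$ lies outside it. Then $B=\Q(\alpha)+\Q(\alpha)\beta$, so these four elements span $B$. Hence $\Lambda:=\Z\langle1,\alpha,\beta,\alpha\beta\rangle$ is a full-rank sublattice of $\mathcal O$.

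The key step is to compute the discriminant of $\Lambda$ explicitly. A standard quaternion identity gives
\[
\det\bigl(\mathrm{Tr}(x_ix_j^\ast)\bigr)_{x\in\{1,\alpha,\beta,\alpha\beta\}}=\bigl(\mathrm{Tr}(\alpha\beta-\beta\alpha)\,\text{-type term}\bigr)^2=\bigl(N(\alpha\beta-\beta\alpha)\bigr)^2 .
\]
More precisely, the reduced discriminant of $\Lambda$ is $N([\alpha,\beta])$, up to a fixed constant of normalization. I would check this normalization first on the Hurwitz or Lipschitz order. Since $\Lambda\subseteq\mathcal O$, its reduced discriminant is a multiple of $6\ell$. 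This gives $6\ell\le N(\alpha\beta-\beta\alpha)$, possibly with a factor such as $\tfrac12$ or $2$ that must be tracked carefully.

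It then remains to bound the commutator norm. Write $\alpha=a+\alpha_0$ and $\beta=b+\beta_0$ with pure parts $\alpha_0,\beta_0$. Then $[\alpha,\beta]=[\alpha_0,\beta_0]=2\,\alpha_0\times\beta_0$, where $\times$ is the cross product in the pure quaternions with respect to the norm form. This yields
\[
N([\alpha,\beta])=4\bigl(N(\alpha_0)N(\beta_0)-\langle\alpha_0,\beta_0\rangle^2\bigr)\le4N(\alpha_0)N(\beta_0)\le4N(\alpha)N(\beta).
\]
The last step uses $N(\alpha)=a^2+N(\alpha_0)\ge N(\alpha_0)$. Combining this with the discriminant bound gives \eqref{eq:volume}.

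The main obstacle is pinning down the exact constant in the discriminant identity for $\Lambda$. Concretely, one must show that $\mathrm{disc}(\Lambda)$ is exactly $N([\alpha,\beta])$ and not some multiple of it. My plan is to compute the Gram matrix with respect to the trace form directly, using $\mathrm{Tr}(\alpha\beta^\ast)$ and related entries. If a slack factor appears, I would absorb it by noting that the index $[\mathcal O:\Lambda]$ is an integer at least $1$. If the constant turns out unfavorable, I would instead argue with the sublattice $\Z+\Z\alpha_0'+\Z\beta_0'+\Z\alpha_0'\beta_0'$, where $\alpha_0'=2\alpha-\mathrm{Tr}(\alpha)$ and $\beta_0'$ is defined similarly. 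In that case I would track the resulting factor against the $4$ that is allowed in \eqref{eq:volume}.
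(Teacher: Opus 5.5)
Your argument is correct, but it controls the covolume of $\Lambda=\Z+\Z\alpha+\Z\beta+\Z\alpha\beta$ differently from the paper. The paper never computes $\covol(\Lambda)$. It bounds it above by Hadamard's inequality, $\covol(\Lambda)\le\|1\|\,\|\alpha\|\,\|\beta\|\,\|\alpha\beta\|$ with $\|x\|^2=\mathrm{Tr}(xx^\ast)=2N(x)$. This gives $\sqrt2\cdot\sqrt8\,N(\alpha)N(\beta)=4N(\alpha)N(\beta)$ at once, which it compares with $\covol(\End(E,C))=6\ell$.

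You instead compute $\covol(\Lambda)$ exactly, and the normalization constant you were worried about is exactly $1$. Here is the check you left open:
\begin{itemize}
\item Put $\langle x,y\rangle:=\tfrac12\mathrm{Tr}(xy^\ast)$, let $\alpha_0,\beta_0$ be the trace-zero parts, and set $c:=\tfrac12(\alpha\beta-\beta\alpha)=\tfrac12(\alpha_0\beta_0-\beta_0\alpha_0)$.
\item Polarizing $x^2=-N(x)$ for trace-zero $x$ gives $\alpha_0\beta_0=-\langle\alpha_0,\beta_0\rangle+c$. So the change of basis from $(1,\alpha,\beta,\alpha\beta)$ to $(1,\alpha_0,\beta_0,c)$ is unipotent and preserves the Gram determinant.
\item By cyclicity of the trace, $c$ is orthogonal to $1$, $\alpha_0$ and $\beta_0$.
\item Multiplicativity of $N$ gives $N(c)=N(\alpha_0)N(\beta_0)-\langle\alpha_0,\beta_0\rangle^2$.
\end{itemize}
The Gram matrix for $\mathrm{Tr}(xy^\ast)$ is therefore block diagonal, with determinant $2\cdot4N(c)\cdot2N(c)=N(\alpha\beta-\beta\alpha)^2$. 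Hence $N(\alpha\beta-\beta\alpha)=6\ell\,[\End(E,C):\Lambda]$, and your commutator estimate finishes the proof.

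Your fallback of absorbing a slack factor into the index would not have worked. The bound $[\End(E,C):\Lambda]\ge1$ only yields $\covol(\Lambda)\ge6\ell$, which cannot offset a constant larger than $1$. Fortunately no such constant appears.

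Your route buys an exact identity and the divisibility $6\ell\mid N(\alpha\beta-\beta\alpha)$. It also gives the sharper inequality $6\ell\le4\bigl(N(\alpha_0)N(\beta_0)-\langle\alpha_0,\beta_0\rangle^2\bigr)$. In Proposition~\ref{prop:separation}, where $N(\alpha_0)=N(\beta_0)=n/4$, this improves $(n+1)^2/4$ to $n^2/4$. The paper's Hadamard argument buys brevity: it needs no quaternion identity, only the norms of the four spanning vectors.
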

\begin{proof}
In our normalization, $\End(E)$ has covolume $\ell$ for
$\langle x,y\rangle=\mathrm{Tr}(xy^*)$. At $p=2,3$, preserving the
line $C[p]$ imposes an index-$p$ condition on 
$M_2(\Z_p)$, At other primes, there is no additional condition.
Thus, $[\End(E):\End(E,C)]=6$ and $\covol(\End(E,C))=6\ell$.
Noncommutativity makes $1,\alpha,\beta,\alpha\beta$ linearly independent
over $\Q$. Their span $L$ is a full lattice in $\End(E,C)$. Hadamard's inequality,
(see, e.g., \cite[Lemma~2.1.1]{GL}, gives
\[
6\ell\le\covol(L)
\le\sqrt2\cdot\sqrt{2N(\alpha)\cdot 2N(\beta)\cdot
                    2N(\alpha\beta)}
=4N(\alpha)N(\beta).
\]
\end{proof}

The connection to canonical congruence primes is given by the following result.
\begin{proposition}\label{prop:separation}
Let $n$ be squarefree, $n\equiv23\pmod{24}$, and let $\ell>3$ satisfy
$\leg{-n}{\ell}=-1$. If
\begin{equation}\label{eq:collision-bound}
\ell>\frac{(n+1)^2}{24},
\end{equation}
then $\overline A_n$ and $\overline B_n$ are coprime in $\F_\ell[X]$.
\end{proposition}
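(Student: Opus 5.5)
We argue by contradiction. Suppose $\overline A_n$ and $\overline B_n$ share a root in $\overline{\F}_\ell$. By Lemma~\ref{lem:integral-coordinate}, $t$ identifies $X_0(6)_{\Z[1/6]}$ with $\mathbb P^1$. Hence a common root is a point of $X_0(6)(\overline{\F}_\ell)$ that is the reduction of a CM point $\tau_Q$ with $Q\in\mathcal Q_{n,r}$, $r\in\{1,11\}$, and also of a CM point $\tau_{Q'}$ with $Q'\in\mathcal Q_{n,r'}$, $r'\in\{5,7\}$. Both roots lie in the finite part, because $t$ has its only pole at $\infty$ and CM points specialize to the interior (the argument already given in Lemma~\ref{lem:integral-coordinate}). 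So the common reduction is a pair $(E,C)$ with $E/\overline{\F}_\ell$ and $C$ cyclic of order $6$.

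Since $\leg{-n}{\ell}=-1$, the prime $\ell$ is inert in $K$, so by Deuring the reduction $E$ is supersingular. Each of the two CM lifts $(E_1,C_1)$ and $(E_2,C_2)$ has $\End(E_i,C_i)\supseteq\mathcal O_K$, since the level structure is $\mathcal O_K$-stable by the Heegner condition $6\mid a$. Reduction injects these into $\End(E,C)$. This yields two embeddings $\iota_1,\iota_2\colon\mathcal O_K\hookrightarrow\End(E,C)$.

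The key step is to show that $\iota_1(\mathcal O_K)\ne\iota_2(\mathcal O_K)$, or more precisely that the images do not commute. If they commuted, they would generate the same quadratic subfield, so $\iota_2=\iota_1$ or $\iota_2=\bar\iota_1$. The orientation $r\bmod 12$ records how $\mathcal O_K$ acts on $C[2]$ and $C[3]$, i.e.\ which prime above $2$ and which prime above $3$ annihilates the kernel. This datum is determined by the embedding restricted to $C$. It survives reduction because $C$ is étale at $\ell>3$, and conjugating the embedding flips both choices, sending $r\mapsto -r$. The classes $\{1,11\}$ and $\{5,7\}$ differ from each other in exactly one of the two choices, by $r\mapsto 5r$ or $7r$ modulo $12$. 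So neither $\iota_1$ nor $\bar\iota_1$ can equal $\iota_2$. I expect this orientation bookkeeping, done carefully modulo $2$ and $3$ inside the Eichler order, to be the main obstacle.

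Given noncommutativity, take $\alpha=\iota_1\!\left(\tfrac{1+\sqrt{-n}}2\right)$ and $\beta=\iota_2\!\left(\tfrac{1+\sqrt{-n}}2\right)$, each of norm $(1+n)/4$. These do not commute, and Lemma~\ref{lem:volume} gives
\[
6\ell\le4\cdot\frac{(n+1)^2}{16},\qquad\text{i.e.}\qquad \ell\le\frac{(n+1)^2}{24},
\]
which contradicts \eqref{eq:collision-bound}. Hence $\overline A_n$ and $\overline B_n$ have no common root and are coprime. Two points need checking: that the reduction map on endomorphisms is compatible with $C$, which holds because $C$ is finite étale and lifts uniquely; and that the choice of generator $\tfrac{1+\sqrt{-n}}2$ is legitimate, which it is since $n\equiv 3\pmod 4$.
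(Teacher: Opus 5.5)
Your proposal is correct and follows the paper's proof step for step. A common root gives a single supersingular pair $(E,C)$, receiving two embeddings of $\mathcal O_K$ into the level-$6$ Eichler order; the orientation data on $C$ forbids $\iota_2\in\{\iota_1,\bar\iota_1\}$, and Lemma~\ref{lem:volume} then contradicts \eqref{eq:collision-bound}. The only difference is presentational: you phrase the obstruction through which primes above $2$ and $3$ annihilate $C[2]$ and $C[3]$, while the paper handles the bookkeeping you flagged as the main obstacle by a direct computation. It notes that $\omega=a\tau_Q+(b+1)/2$ acts on $C_Q=\langle 1/6\rangle$ by the scalar $(b+1)/2 \bmod 6$, which gives $1,0$ for $r=1,11$ and $3,4$ for $r=5,7$, whereas $\beta\in\{\alpha,1-\alpha\}$ would force a value in $\{0,1\}$.
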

\begin{proof}
Suppose otherwise. Work in a number field containing all CM coordinates,
at a prime over $\ell$, and then extend so the CM elliptic curves have good
reduction. A common root gives a CM point from each of the two
orientation sets with the same reduced $t$-coordinate.
Lemma~\ref{lem:integral-coordinate} identifies the reductions as the
same pair $(E,C)$ over $\overline{\F}_\ell$. Since $\ell$ is inert in
$K=\Q(\sqrt{-n})$, $E$ is supersingular.
The two CM actions specialize injectively into $\End(E,C)$.

Let $\alpha$ and $\beta$ be the images, under these two actions, of
$\omega:=(1+\sqrt{-n})/2$. Each has trace $1$ and norm $(n+1)/4$.
We now show that they cannot commute. 
For $Q=[a,b,c]$, the complex point is
\[
E_Q=\C/(\Z \tau_Q+\Z),\qquad C_Q=\langle1/6\rangle.
\]
As $\omega=a\tau_Q+(b+1)/2$ and $6|a$, its action on $C_Q$ is the
scalar $(b+1)/2\pmod6$. The possibilities are
\[
\begin{array}{c|cccc}
b\pmod{12}&1&11&5&7\\ \hline
\omega|_{C_Q}\pmod6&1&0&3&4.
\end{array}
\]
These actions persist under specialization, since $\ell\nmid6$.
Hence $\alpha$ acts on $C$ by $0$ or $1$, while $\beta$ acts by
$3$ or $4$.

If $\alpha$ and $\beta$ did commute, the centralizer of $\alpha$ in
$B_{p,\infty}$ would give $\beta\in\Q(\alpha)$.
Their common characteristic polynomial is
$X^2-X+(n+1)/4$, so $\beta=\alpha$ or $\beta=1-\alpha$.
Both possibilities make its action on $C$ a member of $\{0,1\}$,
a contradiction. Lemma~\ref{lem:volume} therefore gives
\[
6\ell\le4\left(\frac{n+1}{4}\right)^2
=\frac{(n+1)^2}{4},
\]
contrary to \eqref{eq:collision-bound}.
\end{proof}
Finally, we obtain the following essential result for our proofs.
\begin{corollary}\label{cor:nonvanishing}
Assume the conditions of Proposition~\ref{prop:separation}. If we also have
$2h(-n)<\ell$, then we have the non-congruence $W_n\not\equiv0\pmod \ell$.
\end{corollary}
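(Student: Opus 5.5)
We argue by contradiction: suppose $\overline W_n=\overline A_n'\,\overline B_n-\overline A_n\,\overline B_n'=0$ in $\F_\ell[X]$. By Lemma~\ref{lem:integral-coordinate}, $A_n$ and $B_n$ are monic of degree $2h(-n)$ with coefficients in $\Z[1/6]$. Since $\ell>3$, they reduce to monic polynomials $\overline A_n,\overline B_n\in\F_\ell[X]$ of the same degree $d:=2h(-n)$. By Proposition~\ref{prop:separation}, these reductions are coprime.

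The key algebraic step is the following. From $\overline A_n'\overline B_n=\overline A_n\overline B_n'$ and $\gcd(\overline A_n,\overline B_n)=1$, we get $\overline A_n\mid\overline A_n'$. Since $\deg\overline A_n'<\deg\overline A_n$, this forces $\overline A_n'=0$. The identity then reads $\overline A_n\overline B_n'=0$, and since $\overline A_n\neq0$, also $\overline B_n'=0$.

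Next, $\overline A_n$ is monic of degree $d$ with $1\le d<\ell$. Its derivative therefore has leading term $d\,X^{d-1}$, and $d\not\equiv0\pmod\ell$ because $0<d<\ell$. Hence $\overline A_n'\neq0$, a contradiction. Equivalently, a polynomial with vanishing derivative in characteristic $\ell$ lies in $\F_\ell[X^\ell]$, so its degree is a multiple of $\ell$, which is impossible for $0<d<\ell$. This proves $W_n\not\equiv0\pmod\ell$.

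There is no real obstacle here. One should check only two small points: that reduction modulo $\ell$ preserves monicity and degree (it does, since the leading coefficient is $1$ and $\ell\nmid6$), and that $h(-n)\ge1$, so that $d\ge1$. The substantive input, coprimality, is supplied entirely by Proposition~\ref{prop:separation}.
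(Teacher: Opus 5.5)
Your proof is correct and follows the paper's argument: coprimality from Proposition~\ref{prop:separation} gives $\overline A_n\mid\overline A_n'$, the degree comparison then forces $\overline A_n'=0$, and this is impossible for a monic polynomial of degree $2h(-n)$ with $0<2h(-n)<\ell$. Your additional observations, that $\overline B_n'=0$ as well and that reduction preserves monicity because $\ell\nmid 6$, are harmless extras that the contradiction does not need.
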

\begin{proof}
Suppose $\overline A_n'\overline B_n=\overline A_n\overline B_n'$.
Coprimality gives $\overline A_n\mid\overline A_n'$.
Since $\deg\overline A_n'<\deg\overline A_n$, its derivative must be
zero. In characteristic $\ell$ this forces every nonzero exponent of
$\overline A_n$ to be divisible by $\ell$. But it is monic of degree
$2h(-n)$ strictly between $0$ and $\ell$, a contradiction.
\end{proof}

\section{Proof of Theorem~\ref{thm:main}}

The final key ingredient is a bound of Trevi\~no.
Using the correction in
\cite[Theorem~1]{TrevinoCorrection}, his Theorem~1 implies that if a nonprincipal character modulo
an odd prime $\ell$ is constant on $H$ consecutive integers, then
\begin{equation}\label{eq:character}
H\le3.38\cdot \ell^{1/4}\log \ell.
\end{equation}

This allows us to show the following estimate.
\begin{lemma}\label{lem:small-n}
For every prime $\ell\ge10^{11}$, there is a squarefree integer
$n\equiv23\pmod{24}$ satisfying
\begin{equation}\label{eq:small-n}
\leg{-n}{\ell}=-1,\qquad n+1<\sqrt{24\ell},\qquad 2h(-n)<\ell.
\end{equation}
\end{lemma}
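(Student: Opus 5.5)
We need, for every prime $\ell\ge10^{11}$, a squarefree $n\equiv23\pmod{24}$ with $\leg{-n}{\ell}=-1$, $n+1<\sqrt{24\ell}$ and $2h(-n)<\ell$. The plan is to choose $n$ of a special shape that is automatically squarefree and $\equiv 23\pmod{24}$. We then use Trevi\~no's bound \eqref{eq:character} to force the character condition. The class number condition is essentially free once $n$ is small.

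\textbf{Step 1: reduce to a character condition on consecutive integers.} Since $\ell\equiv$ anything, write $\leg{-n}{\ell}=\leg{-1}{\ell}\leg{n}{\ell}$. To control squarefreeness cheaply, take $n=p$ prime, or better $n=24k-1$ with $k$ ranging over an interval. Squarefreeness is the awkward part, so I would instead search among $n=24k-1$ and exploit that the Legendre symbol $k\mapsto\leg{-(24k-1)}{\ell}=\leg{24}{\ell}\leg{-(k-\overline{24})}{\ell}$ is, up to a fixed sign and shift, the quadratic character evaluated at consecutive integers $k-\overline{24}\pmod\ell$.

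Let $K:=\lfloor(\sqrt{24\ell}-1)/24\rfloor$. If $\leg{-(24k-1)}{\ell}=+1$ (or $0$) for all $1\le k\le K$, then $\leg{\cdot}{\ell}$ is constant on roughly $K\approx\sqrt{\ell/24}$ consecutive residues. Zeros are impossible since $24k-1<\ell$. Trevi\~no's bound then gives $\sqrt{\ell/24}\lesssim3.38\,\ell^{1/4}\log\ell$, which fails for $\ell\ge10^{11}$. Check: $\ell^{1/4}\approx 562$ and $3.38\cdot562\cdot25.3\approx 4.8\cdot10^4$, while $\sqrt{\ell/24}\approx 6.5\cdot10^4$.

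\textbf{Step 2: squarefreeness (main obstacle).} Step 1 only produces some $n=24k-1$ with the right character value, not necessarily a squarefree one. My first fix is to observe that if $n=s^2n_0$ with $n_0$ squarefree, then $\leg{-n_0}{\ell}=\leg{-n}{\ell}=-1$. Also $s$ is coprime to $6$, so $s^2\equiv1\pmod{24}$ and $n_0\equiv23\pmod{24}$. Finally $n_0\le n$. So replacing $n$ by its squarefree kernel preserves every condition, and Step 2 is handled for free. I would double-check the boundary case $n_0\ge 23>0$, which holds since $n_0\equiv23\pmod{24}$.

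\textbf{Step 3: the class number condition.} Use the trivial bound $h(-n)\le\frac{\sqrt n}{\pi}(\log n+2)$, or even $h(-n)<n$ (counting reduced forms). Then $2h(-n)<2n<2\sqrt{24\ell}<\ell$ for $\ell\ge10^{11}$. This settles the last condition.

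Finally, I would state the numerics explicitly. We need $K\ge 3.38\,\ell^{1/4}\log\ell+1$, i.e. $\sqrt{24\ell}-25>24(3.38\,\ell^{1/4}\log\ell)$, at $\ell=10^{11}$. Since $\ell^{1/4}\log\ell/\sqrt{\ell}$ is decreasing there, the inequality then holds for all larger $\ell$. The numerical margin is modest, about $6.5$ versus $4.8$ in units of $10^4$, so this verification is the step to do carefully.
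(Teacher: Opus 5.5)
Your proposal is correct and follows the paper's argument in all essentials: Trevi\~no's bound applied to the Legendre symbol on the shifted block $k-\overline{24}$, $1\le k\le K$ (zeros are excluded because $24k-1<\ell$), removal of square factors using $s^2\equiv1\pmod{24}$ and $\ell\nmid s$, and the reduced-form count $h(-n)<n$, which gives $2h(-n)<2\sqrt{24\ell}<\ell$. The differences are cosmetic and equally valid: you argue by contradiction on the fixed window $1\le k\le K$ instead of taking the least $k$ with $\leg{1-24k}{\ell}\ne1$, and you pass directly to the squarefree part of $n$ rather than deducing squarefreeness from minimality.
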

\begin{proof}
Let $k\ge1$ be the first integer with $\leg{1-24k}{\ell}\ne1$.
Such a $k$ exists, since the progression hits every residue modulo
$\ell$. If $c$ represents $24^{-1}\pmod \ell$, then for $1\le j<k$,
\[
\leg{j-c}{\ell}=\leg{-24}{\ell}.
\]
This is an interval of $k-1$ consecutive integers on which a nonprincipal
character is constant. Equation~\eqref{eq:character} gives
$k\le3.38\ell^{1/4}\log \ell+1$. Thus $n=24k-1$ satisfies
\begin{equation}\label{eq:n-bound}
n+1\le81.12\ell^{1/4}\log \ell+24<\sqrt{24\ell}.
\end{equation}
For the final inequality, the ratio of its middle expression to
$\sqrt{24\ell}$ is
\[
\frac{81.12}{\sqrt{24}}\frac{\log \ell}{\ell^{1/4}}
       +\frac{24}{\sqrt{24\ell}}.
\]
Both summands decrease for $\ell>e^4$. At $\ell=10^{11}$, the elementary
bounds $\log(10^{11})<26$, $(10^{11})^{1/4}>560$, and
$\sqrt{24}>4.8$ give an upper bound
\[
\frac{81.12\cdot26}{4.8\cdot560}+\frac{24}{10^6}<0.785<1.
\]
This proves \eqref{eq:n-bound} throughout the required range.

In particular, $0<n<\ell$. Thus, the first character value different from
$1$ cannot be zero; it equals $-1$. If $n$ had a square factor $u^2>1$,
then $(u,6\ell)=1$ and $u^2\equiv1\pmod{24}$. The smaller positive integer
$n/u^2$ would still be $23\pmod{24}$ and would have the same inert
character, contradicting the minimality of $k$. Therefore $n$ is
squarefree.

Finally, a reduced positive form of discriminant $-n$ has
$1\le a\le M:=\lfloor\sqrt{n/3}\rfloor$ and $|b|\le a$.
Counting possible pairs $(a,b)$, without imposing all the reduction
conditions, gives
\[
h(-n)\le\sum_{a=1}^M(2a+1)=M^2+2M
\le\frac n3+2\sqrt{\frac n3}<n\qquad(n\ge23).
\]
Since $\ell\ge10^{11}>96$ and $n<\sqrt{24\ell}$, this implies
$2h(-n)<2n<2\sqrt{24\ell}<\ell$.
\end{proof}

\begin{proof}[Proof of Theorem~\ref{thm:main}]
The congruences at $5,7,13$ are known, and \cite[Theorem~3]{AK} excludes
all other primes below $10^{11}$. Suppose that a prime $\ell\ge10^{11}$
were a canonical congruence prime. Choose $n$ using
Lemma~\ref{lem:small-n}. Proposition~\ref{prop:W-zero} shows that
$W_n\equiv0\pmod \ell$, while Corollary~\ref{cor:nonvanishing} shows 
$W_n\not\equiv0\pmod \ell$. This contradiction completes the proof.
\end{proof}


\end{document}